\def\SL{{\rm SL}}
\def\Fl{{\rm Fl}}
\def\pt{{\rm pt}}
\def\Z{{\mathbb Z}}
\newcommand{\af}{\mathrm{af}}
\newcommand{\bL}{\mathbb{L}}
\newcommand{\cG}{\mathcal{G}}
\newcommand{\cF}{\mathcal{F}}
\newcommand{\cO}{\mathcal{O}}
\newcommand{\Frac}{\mathrm{Frac}}
\newcommand{\ip}[2]{\langle #1\,,\,#2\rangle}
\newcommand{\K}{\mathbb{K}}
\newcommand{\kk}{\kappa}
\newcommand{\la}{\lambda}
\newcommand{\Tor}{\mathscr{T}or}
\newcommand{\Mb}{\overline{\mathcal{M}}}
\newcommand{\euler}[1]{\chi_{{#1}}}
\newcommand{\ev}{\mathrm{ev}}
\newcommand{\gw}[2]{\langle #1 \rangle^{\mbox{}}_{#2}}
\def\Q{{\mathbb Q}}
\def\C{{\mathbb C}}
\def\Gr{{\mathrm{Gr}}}
\def\tGr{{\overline{\Gr}}}
\def\QK{{\rm QK}}
\def\ch{{\rm ch}}
\newtheorem{lemma}{Lemma}
\newtheorem{prop}{Proposition}
\newtheorem{thm}{Theorem}
\newtheorem{remark}{Remark}
\newtheorem{conj}{Conjecture}
\begin{document}
\title{A conjectural Peterson isomorphism in $K$-theory}
\author{Thomas Lam}
\address{Department of Mathematics, University of Michigan, Ann Arbor, MI 48109, USA}
\email{tfylam@umich.edu}

\author{Changzheng Li}
\address{School of Mathematics, Sun Yat-sen University, Guangzhou 510275, P.R. China}
 \email{lichangzh@mail.sysu.edu.cn}

\author{Leonardo~C.~Mihalcea}
\address{460 McBryde Hall, Department of Mathematics, Virginia Tech,
  Blacksburg, VA 24061, USA}
\email{lmihalce@math.vt.edu}

\author{Mark Shimozono}
\address{460 McBryde Hall, Department of Mathematics, Virginia Tech,
  Blacksburg, VA 24061, USA}
\email{mshimo@math.vt.edu}

\maketitle
\begin{abstract}
We state a precise conjectural isomorphism between localizations of the equivariant quantum $K$-theory ring of a flag variety and the equivariant $K$-homology ring of the affine Grassmannian, in particular relating their Schubert bases and structure constants. This generalizes Peterson's isomorphism in (co)homology. We prove a formula for the Pontryagin structure constants in the $K$-homology ring, and we use it to check our conjecture in few situations.
\end{abstract}
\section{The $K$-Peterson conjecture}\label{sec:intro}
The goal of this manuscript is to present a precise conjecture which asserts the coincidence of the Schubert structure constants for the Pontryagin product in $K$-homology of the affine Grassmannian, with those for the quantum $K$-theory of the flag manifold. This is a $K$-theoretic analogue of the celebrated Peterson isomorphism between the homology of the affine Grassmannian and the quantum cohomology of the flag manifold \cite{P, LS, LL}.

Let $G$ be a simple and simply-connected complex Lie group with chosen Borel subgroup $B$ and maximal torus $T$, Weyl group $W$ and affine Weyl group $W_\af = W \ltimes Q^\vee$ where $Q^\vee$ denotes the coroot lattice.  Let $\Lambda$ denote the weight lattice of $G$, so that the representation ring $R(T)$ of $T$ is given by $R(T) \simeq K_T(\pt) \simeq \Z[\Lambda]$.  The torus-equivariant quantum $K$-theory $\QK_T(G/B)$ of the flag variety $G/B$ has as basis over $\Z[[q]] \otimes R(T)$ the Schubert classes $\cO^w$, for $w \in W$, of structure sheaves of Schubert varieties in $G/B$.  The torus-equivariant $K$-homology $K_0^T(\Gr)$ of the affine Grassmannian $\Gr = \Gr_G$ of $G$ has as basis over $\Z[\Lambda]$ the Schubert classes $\cO_x$ of structure sheaves of Schubert varieties in $\Gr$, where $x$ varies over the set $W_\af^- \subset W_\af$ of affine Grassmannian elements.

\begin{conj}\label{conj:constants} Let $ut_\lambda, vt_\mu, wt_\nu \in W_\af^-$, and let $\eta \in Q^\vee$. Assume that $\nu = \lambda + \mu$.
Then \[ c_{ut_\lambda, vt_\mu}^{wt_{\nu + \eta}} = N_{u,v}^{w, \eta} \/ \qquad \text{in}\; K^*_T(\pt) \] where $c$'s are the structure constants in $K_0^T(\Gr)$ with respect to $\cO_x$ and $N$'s are the structure constants in $\QK_T(G/B)$ with respect to $\cO^w$.
\end{conj}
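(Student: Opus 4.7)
The plan is to follow the template of the cohomological Peterson isomorphism (Lam--Shimozono, Leung--Li), adapting each step to the $K$-theoretic setting. The key intermediate object is a $K$-theoretic Peterson subalgebra $\bL^T$ inside the equivariant affine $K$-nilHecke ring of Kostant--Kumar. As a first step I would use the realization, due to Lam--Schilling--Shimozono in the non-equivariant case and extended equivariantly, of $K_0^T(\Gr)$ as a commutative subalgebra $\bL^T$ of the affine $K$-nilHecke ring, equipped with a distinguished basis $\{j_x\}_{x \in W_\af^-}$ corresponding to the Schubert basis $\{\cO_x\}$. This supplies an algebraic model for the left-hand side of the conjectured equality of structure constants and trades a geometric assertion for a purely algebraic one inside $\bL^T$.

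Second, I would construct an algebra homomorphism $\Phi$ from a suitable localization of $\QK_T(G/B)$ to a localization of $\bL^T$ in which the translation elements $t_\eta$ become invertible. On generators the natural assignment is $\cO^{s_i} \mapsto j_{s_i}$ for simple reflections, together with $q^\eta \mapsto t_\eta$ on the Novikov parameters, extended multiplicatively so that $\cO^w \cdot q^\eta$ maps to an element of $\bL^T$ whose Schubert expansion is supported on translates of the form $y t_\eta$ with $y \in W$. To verify $\Phi$ is well defined, one would use a presentation of $\QK_T(G/B)$ by generators and relations wherever available, or equivalently match the quantum $K$-Chevalley formula on the left with the affine $K$-Chevalley/Pieri formula inside $\bL^T$ on the right; the Pontryagin structure constant formula proved later in the paper is meant to be a primary combinatorial input here.

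Third, having produced $\Phi$, one has to show it is an isomorphism after localization and that $\Phi(\cO^w \cdot q^\eta) = j_{w t_\eta}$ whenever $w t_\eta \in W_\af^-$. Matching Schubert bases in this way directly translates the quantum expansion $\cO^u \cdot \cO^v = \sum N_{u,v}^{w,\eta}\, q^\eta \cO^w$ into the Pontryagin expansion $\cO_{u t_\lambda} \cdot \cO_{v t_\mu} = \sum c_{u t_\lambda, v t_\mu}^{w t_{\nu+\eta}}\, \cO_{w t_{\nu+\eta}}$ under $\nu = \lambda + \mu$, and so yields the desired identity $c_{ut_\lambda, vt_\mu}^{wt_{\nu+\eta}} = N_{u,v}^{w,\eta}$. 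The basis matching itself I would attack by an equivariant localization argument combined with a Nakayama-type reduction modulo the translation parameters, using the known Peterson-type identification $K_T(G/B) \simeq \bL^T / (t_\eta - 1 : \eta \in Q^\vee)$ (suitably formulated) as the base case.

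The main obstacle is the second step. In contrast to the cohomology situation, $\QK_T(G/B)$ has no simple polynomial presentation, and its structure constants $N_{u,v}^{w,\eta}$ are a priori infinite power series in the Novikov parameters with nontrivial contributions from higher degree curves. Consequently, showing that $\Phi$ respects multiplication requires either a general quantum $K$-Chevalley rule that is combinatorially identifiable with the affine $K$-Pieri rule inside $\bL^T$, or a direct geometric bridge identifying the two sides with a common object such as a quasimap or semi-infinite flag variety theory or Givental's permutation-equivariant quantum $K$-invariants. Overcoming this obstacle is essentially the content of the conjecture, and my expectation is that the Pontryagin formula proved here is exactly the tool that will, after further development of either the algebraic or the geometric bridge, carry out the comparison in general.
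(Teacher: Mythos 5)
What the paper provides is (a) an equivalent reformulation as an $R(T)$-algebra isomorphism (Conjecture~\ref{conj:isomorphism}); (b) an algebraic model of $K_0^T(\Gr)$ as the $K$-Peterson subalgebra $\bL$ with its ideal-sheaf basis $\{k_w\}$ and Schubert basis $\{l_w\}$ (Theorem~\ref{T:KPeterson}); (c) a closed localization formula for the structure constants $c_{x,y}^z$ (Theorem~\ref{T: coeffc}); and (d) verification of the conjectured identity for $G=\SL_2$, for $\SL_3$ in degrees $(0,0),(1,0),(0,1)$, and for general $G$ in degrees $d=0$ and $d=\alpha_i^\vee$. Your proposal should be framed in the same register: as a strategy for attacking the conjecture plus evidence, not as a proof.

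Granting that framing, your strategic outline is essentially the one the authors use. Step one (realize $K_0^T(\Gr)$ inside the affine $K$-nilHecke ring with distinguished bases) is exactly Theorem~\ref{T:KPeterson}. Your step two is the actual content of the conjecture, and you correctly identify that this is where the cohomological argument does not carry over: in quantum $K$-theory there is no known presentation of $\QK_T(G/B)$, the $N_{u,v}^{w,d}$ are only conjecturally polynomial in $q$ (via the finiteness conjecture of Buch--Chaput--Mihalcea--Perrin), and there is no general $K$-theoretic Chevalley rule for $G/B$ to match against the $\bL$-side Pieri computation. The paper acknowledges precisely this gap, noting its verifications are ``quite involved'' even in low degree. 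Your proposal would be strengthened by recording the direction: the paper's Conjecture~\ref{conj:isomorphism} goes from the (localized) $K$-homology side to the quantum $K$-theory side, $\cO_{wt_\lambda}\cO_{t_\nu}^{-1} \mapsto q_{\lambda-\nu}\cO^w$, while your $\Phi$ is stated in the inverse direction. Relatedly, your generator assignment $\cO^{s_i}\mapsto j_{s_i}$ is not well posed as written, since $s_i\notin W_\af^-$ for $i\neq 0$; the correct target is a localized class such as $\cO_{s_i t_\lambda}\cO_{t_\lambda}^{-1}$ with $\lambda$ sufficiently antidominant so that both $s_i t_\lambda$ and $t_\lambda$ lie in $W_\af^-$. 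Finally, your Nakayama/specialization idea ``$\bL^T/(t_\eta-1) \simeq K_T(G/B)$'' is a plausible base case but does not appear in the paper, and proving it would itself require a degree-zero comparison that is nontrivial in $K$-theory; it is worth flagging that even this specialization step is not established here.

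In short: you have correctly located the genuine gap --- the well-definedness and multiplicativity of $\Phi$ --- and your framework matches the paper's. But you should state plainly that no proof exists (in the paper or in your proposal), fix the direction and domain issues in the map, and acknowledge that the ``base case'' reduction you invoke is itself unproven.
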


Conjecture \ref{conj:constants} implies that the multiplication in the ring $\QK_T(G/B)$ is finite, and thus it is possible to define it over $\Z[q]$ instead of $\Z[[q]]$. On the affine side, it implies that we have the formula $\cO_{wt_\lambda} \cdot \cO_{t_\nu} = \cO_{wt_{\lambda+\nu}}$ in the $K$-homology ring  $K_0^T(\Gr)$ endowed with the Pontryagin product. Conjecture \ref{conj:constants} can then be alternatively formulated as follows.


\begin{conj}\label{conj:isomorphism}
The $R(T)$-module homomorphism
\begin{align*}
\Psi: K_0^T(\Gr)[\cO_{t_\lambda}^{-1}] & \longrightarrow { \QK_T(G/B)}[q_i^{-1}] \\
\cO_{wt_\lambda}\cO_{t_\nu}^{-1} &\longmapsto q_{\lambda-\nu}\cO^w
\end{align*}
is an isomorphism of $R(T)$-algebras.
\end{conj}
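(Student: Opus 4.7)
The plan is to deduce Conjecture~\ref{conj:isomorphism} as a formal reformulation of Conjecture~\ref{conj:constants}, as the text anticipates. The argument has three substantive steps: well-definedness of $\Psi$, the ring homomorphism property, and bijectivity.

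First, I would establish the Pontryagin identity
\[
\cO_{wt_\lambda}\cdot\cO_{t_\nu}=\cO_{wt_{\lambda+\nu}}\quad\text{in } K_0^T(\Gr),
\]
which is the affine consequence of Conjecture~\ref{conj:constants} already noted in the text. This is obtained by specializing Conjecture~\ref{conj:constants} to $v=1$ and $\mu=\nu$, using the elementary fact $N_{u,1}^{w,\eta}=\delta_{u,w}\delta_{\eta,0}$ in $\QK_T(G/B)$. The identity implies $\cO_{t_\lambda}\cdot\cO_{t_\mu}=\cO_{t_{\lambda+\mu}}$, so the translation classes generate a group in the localization; it also shows that if $\cO_{wt_\lambda}\cO_{t_\nu}^{-1}=\cO_{w't_{\lambda'}}\cO_{t_{\nu'}}^{-1}$ then $w=w'$ and $\lambda-\nu=\lambda'-\nu'$, so $\Psi$ is well-defined and $R(T)$-linear.

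Second, I would verify the ring homomorphism property on generators. Expanding the Pontryagin product
\[
\cO_{ut_\lambda}\cdot\cO_{vt_\mu}=\sum_{w,\eta}c_{ut_\lambda,vt_\mu}^{wt_{\lambda+\mu+\eta}}\cO_{wt_{\lambda+\mu+\eta}}
\]
and multiplying by $\cO_{t_{\alpha+\beta}}^{-1}$, an application of $\Psi$ yields $\sum_{w,\eta}c_{ut_\lambda,vt_\mu}^{wt_{\lambda+\mu+\eta}}\,q_{\lambda+\mu+\eta-\alpha-\beta}\,\cO^w$. Conjecture~\ref{conj:constants} replaces each coefficient by $N_{u,v}^{w,\eta}$, and the sum collapses to $q_{\lambda+\mu-\alpha-\beta}\cdot\cO^u\cdot\cO^v$, which is exactly $\Psi(\cO_{ut_\lambda}\cO_{t_\alpha}^{-1})\cdot\Psi(\cO_{vt_\mu}\cO_{t_\beta}^{-1})$. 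For bijectivity, I would construct the inverse $\Phi\colon q_\eta\cO^w\mapsto\cO_{wt_\lambda}\cO_{t_{\lambda-\eta}}^{-1}$, taking $\lambda$ sufficiently antidominant that both $wt_\lambda$ and $t_{\lambda-\eta}$ lie in $W_\af^-$; the Pontryagin identity of step one guarantees independence from the choice of $\lambda$. A routine check then confirms that $\Psi\circ\Phi$ and $\Phi\circ\Psi$ are the identity on standard generators.

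The main obstacle in this plan is not the equivalence itself, which amounts to bookkeeping once Conjecture~\ref{conj:constants} is in hand. The genuine difficulty lies in Conjecture~\ref{conj:constants} — the identification of two very differently defined families of structure constants, one coming from Gromov-Witten invariants of $G/B$ and the other from Pontryagin products on $\Gr$. Producing $\Psi$ by an independent route (for example, by matching Peterson-type presentations of the two rings) is a possible alternative strategy, but within the scope of the excerpt the direct reduction through Conjecture~\ref{conj:constants} is the natural one.
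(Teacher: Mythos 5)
Your derivation of Conjecture~\ref{conj:isomorphism} from Conjecture~\ref{conj:constants} is exactly the route the paper intends: the paper sketches precisely this reduction (the Pontryagin identity $\cO_{wt_\lambda}\cdot\cO_{t_\nu}=\cO_{wt_{\lambda+\nu}}$ obtained by taking the second factor to be a translation, followed by formal bookkeeping), and you correctly supply the omitted details via $N_{u,\id}^{w,\eta}=\delta_{u,w}\delta_{\eta,0}$ and the localization argument. One thing you leave implicit that the paper makes explicit is that Conjecture~\ref{conj:constants} forces finiteness of the quantum product, which is what allows one to work over $R(T)[q]$ rather than $R(T)[[q]]$ and hence to interpret $\QK_T(G/B)[q_i^{-1}]$ as the Laurent polynomial ring appearing as the target of $\Psi$.
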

The remainder of this article makes our conventions precise. We will   give the geometric meaning of \cite[Theorems 5.3 and 5.4]{LSS} in Theorem \ref{T:KPeterson}, and provide a precise combinatorial formula of the aforementioned structure constants $c$'s in Theorem \ref{T: coeffc}. This leads to  computational evidence for these conjectures.

\begin{remark}
Ikeda, Iwao, and Maeno \cite{IIM} have recently shown that the  $K$-homology ring $K_0(\Gr_{\SL_n})$ is isomorphic to Kirillov-Maeno's conjectural presentation of the quantum $K$-theory $QK(\Fl_n)$ of complete flag manifold $Fl_n$ after localization.  Their approach is via the relativistic Toda lattice, and the behavior of Schubert classes under their isomorphism is also studied.
\end{remark}
\begin{remark}
 Braverman and  Finkelberg  \cite{BF} showed that the coefficients of Givental's $K$-theoretic $J$-function \cite{givental:onwdvv} for a flag variety are the equivariant characters of the polynomial functions on a Zastava space, which consists of based quasimaps to the flag variety. Moreover, in each homogeneous degree, the functions on a Zastava space are isomorphic to the functions on a transverse slice of a $G$-stable stratum inside another $G$-stable stratum in the affine Grassmannian. Together with the $K$-theoretic reconstruction theorems  \cite{lee.pandharipande,iritani.milanov.tonita},
  this provides a conceptual connection between quantum $K$-theory of flag varieties and $K$-homology of affine Grassmannians.
\end{remark}

\begin{remark}
   In \cite[Corollary 5.10]{he.lam}, it is shown that the  $K$-homology Schubert structure constants determine the 3-point
$K$-theoretic Gromov-Witten invariants of a cominuscule flag variety
$G/P$. However, a direct formula relating the two sets of invariants is
not given.
\end{remark}

\subsection*{Acknowledgements} The authors thank Takeshi Ikeda for explanations of the work \cite{IIM}.  T.L. acknowledges support from the NSF under agreement No. DMS-1464693. C. L. acknowledges supports from the Recruitment Program of Global Youth Experts in China and   the NSFC Grant 11521101.  L.M. acknowledges support from
NSA Young Investigator Awards H98230-13-1-0208 and H98320-16-1-0013, and a Simons Collaboration Grant. M.S. acknowledges support from the NSF grant DMS-1600653.
\section{Quantum $K$-theory of flag varieties}
Let $G$ be a complex, simple, simply-connected Lie group and $B, B^- \subset G$ is a pair of opposite Borel subgroups containing the fixed torus $T := B \cap B^-$. For each element $w \in W$ in the (finite) Weyl group there are the Schubert cells $X(w)^\circ:= BwB/B$, $Y(w):= B^- wB/B$ and the Schubert varieties $X(w):= \overline{B w B/ B}$ and $Y(w) := \overline{B^- w B/ B}$ in the flag manifold $X:= G/B$. Then $\dim_{\C} X(w) = \mathrm{codim}_{\C} Y(w) = \ell(w)$ (the length of $w$). The {\em boundary} of the Schubert varieties is defined by $\partial X(w) = X(w) \setminus X(w)^\circ$ and $\partial Y(w) := Y(w) \setminus Y(w)^\circ$. The boundary is generally a reduced, Cohen-Macaulay, codimension $1$ subscheme of the corresponding Schubert variety.

We briefly recall the relevant definitions regarding the equivariant $K$-theory ring, following e.g. \cite{chriss.ginzburg}. For any (complex) projective variety $Z$ with an algebraic action of a torus $T$, one can define the equivariant $K$-theory ring $K^T(Z)$. This is the ring generated by symbols $[E]_T$ of $T$-equivariant vector bundles $E \to Z$ subject to the relations $[F]_T +[H]_T = [E]_T$ whenever there is a short exact sequence of $T$-equivariant vector bundles $0 \to F \to E \to H \to 0$ on $Z$. The two ring operations on $K^T(Z)$ are defined by \[ [E]_T + [F]_T := [E \oplus F]_T; \quad [E]_T \cdot [F]_T := [E \otimes F]_T \/. \] There is a pairing $\langle \cdot , \cdot \rangle : K^T(Z) \otimes K^T(Z) \to K^T(pt) = R(T)$, where $R(T)$ is the representation ring of $T$, given by \[ \langle [E]_T, [F]_T \rangle = \chi_T ( Z; E \otimes F )=\sum_{i=0}^{\dim Z}(-1)^i \ch_T\big(H^i( Z; E \otimes F )\big)\/; \] here $\chi_T$ denotes the equivariant Euler characteristic and $\ch_T \in R(T)$ denotes the character of a $T$-module. If in addition $Z$ is smooth then any $T$-equivariant coherent sheaf $\mathcal{F}$ on $Z$ has a {\em finite} resolution by equivariant vector bundles, and thus there is a well defined class $[\mathcal{F}]_T \in K^T(Z)$. This identifies the Grothendieck {\em group} $K_T(Z)$ of equivariant coherent sheaves with $K^T(Z)$. For any $T$-equivariant map of projective varieties $f: Z_1 \to Z_2$, there is a well defined push-forward $f_*: K_T(Z_1) \to K_T(Z_2)$ given by $f_*[\mathcal{F}]_T = \sum_{i \ge 0} (-1)^i [R^i f_* \mathcal{F}]_T$; in this language the pairing above is given by $\langle [E]_T , [F]_T\rangle = \pi_* ([E]_T \cdot [F]_T)$ where $\pi: Z \to pt$ is the structure map.

The maximal torus $T$ acts on $X= G/B$ by left multiplication and the Schubert varieties $X(w), Y(w)$ are $T$-stable. Then the structure sheaves of the Schubert varieties determine the Grothendieck classes $\cO_w := [\cO_{X(w)}]_T$ and $\cO^w := [\cO_{Y(w)}]_T$ in the $T$-equivariant $K$-theory ring $K_T(X)$.
We will also need the {\em ideal sheaf} classes $\xi_w:= [\cO_{X(w)}(- \partial X(w) )]_T$ and $\xi^w:= [\cO_{Y(w)} (- \partial Y(w))]_T$ determined by the boundaries of the corresponding Schubert varieties. The ideal sheaf classes are duals of the Schubert classes: \[ \langle \cO_u, \xi^v \rangle = \langle \cO^u, \xi_v \rangle = \delta_{u,v} \/, \] where $\delta$ is the Kronecker delta symbol. We refer to  \cite[\S 3.3]{brion:flagv} or \cite{anderson.griffeth.miller} for the proofs of this.

Motivated by the relation between quantum cohomology and Toda lattice discovered by Givental and Kim \cite{givental.kim, kim:toda}, Givental and Lee \cite{givental:onwdvv,lee:qk} defined a ring which deforms both the (equivariant) $K$-theory and the quantum cohomology rings for the flag manifold $X$. This is the equivariant {\em quantum $K$-theory} ring $\QK_T(X)$. Additively, $\QK_T(X)$ is the free module over the power series ring $K_T(pt)[[q]]= R(T)[[q_1, \dots , q_r]]$ which has a $R(T)[[q]]$-basis given by Schubert classes $\cO^w$ (or $\cO_w$) as $w$ varies in $W$. Here $r$ denotes the rank of $H_2(X)$, which for $X=G/B$ is the same as the number of simple reflections $s_i \in W$. The multiplication is determined by Laurent polynomials $N_{u,v}^{w,d} \in R(T)$ such that
\begin{equation}\label{E:qkmult}
\cO^u \star \cO^v = \sum_{w \in W} N_{u,v}^{w, {d}} q^{d} \cO^w \end{equation} where the sum is over effective degrees $d= \sum_{i=1}^r m_i [X(s_i)] \in H_2(X)_{\ge 0}$ (i.e. each $m_i \ge 0$), and $q^d = \prod_{i=1}^r q_i^{m_i}$. The precise definition of $N_{u,v}^{w,d}$ requires taking Euler characteristic of certain $K$-theory classes on the Kontsevich moduli space of stable maps $\Mb_{0,3}(X,d)$ and over some of its boundary components $\Mb_{\bf d}(X)$ : \[ N_{u,v}^{w,{d}} = \sum (-1)^j \euler{\Mb_{{\bf d}}(X)} (\ev_1^*(\cO^u) \cdot \ev_2^*(\cO^v) \cdot \ev_3^*(\xi_w)) \/. \] (This is unlike the case of quantum cohomology, where boundary components do not contribute to the structure constants.) Because boundary strata are fiber products of moduli spaces with $2$ or $3$ marked points, a standard calculation (see e.g. \cite[\S 5]{buch.m:qk}) shows that:
\begin{equation}\label{E:exp}
N_{u,v}^{w,d}=\sum \gw{\cO^u, \cO^v, \xi_{\sigma_0}}{d_0} \cdot
    \gw{\cO^{\sigma_0},
         \xi_{\sigma_1}}{d_1} \cdot ... \cdot \gw{\cO^{d_{j-2}},
        \xi_{\sigma_{j-1}}}{d_{j-1}} \cdot
   \gw{\cO^{\sigma_{j-1}}, \xi_w}{d_j} \/,\end{equation}
where the sum is over $\sigma_0, ..., \sigma_j \in W$ and multidegrees ${\bf d}= (d_0, \ldots, d_j)$ such that $d_i \in H_2(X)$ are effective and $d_i \neq 0$ for  $i >0$. The notation $\gw{\cO^u, \cO^v,\xi_\kappa}{d}$ stands for the (equivariant) $3$-point K-theoretic Gromov-Witten (KGW) invariant
\begin{equation}\label{E:defKGW} \gw{\cO^u, \cO^v,\xi_\sigma}{d} =  \euler{\Mb_{0,3}(X,{d})} (ev_1^*(\cO^u) \cdot \ev_2^*(\cO^v)\cdot \ev_3^*(\xi_\sigma)), \end{equation} where $\ev_i: \Mb_{0,3}(X,d) \to X$ are the evaluation maps. If one integrates over $\Mb_{0,2}(X,d)$, it gives the $2$-point invariants $\gw{\cO^u,\xi_v}{d}$. Geometrical properties of the evaluation maps studied in \cite[\S 3]{bcmp:qkfinite} imply that the $2$-point KGW invariants $\gw{\cO^u,\xi_v}{d}$ are always $0$ or $1$. Formulas for these invariants, using combinatorially explicit recursions to calculate {\em curve neighborhoods} of Schubert varieties, can be found in \cite[Rmk.~7.5]{buch.m:nbhds}.

If one declares $\deg q_i = c_1(T_X) \cap [X(s_i)] = 2$ then the quantum $K$-theory algebra $QK_T(X)$ has a filtration by degrees, and its associated graded algebra is naturally isomorphic to the quantum cohomology algebra. 
Because KGW invariants are non-zero for infinitely many degrees (e.g. $\gw{\cO^{id} , \cO^{id}, \cO^{id}}{d} $ is the trivial $1$-dimensional $T$-representation for any degree $d$), it is unclear whether the expansion of the product $\cO^u \star \cO^v \in \QK_T(X)$ has finitely many terms. This was conjectured to be true for any flag manifold $G/P$ by Buch, Chaput, Mihalcea and Perrin. The conjecture is true in the case of cominuscule Grassmannians \cite{bcmp:qkfinite} and for partial flag manifolds $G/P$ with $P$ a maximal parabolic group \cite{bcmp:ratcon}.

While there is no algorithm to calculate the structure constants $N_{u,v}^{w,d}$ for $X=G/B$ or for arbitrary flag manifolds $G/P$, there are several particular instances where algorithms are available. In the case of a cominuscule Grassmannian, a ``quantum = classical" statement, calculating KGW invariants in terms of certain K-theoretic intersection numbers on two-step flag manifolds was obtained in \cite{buch.m:qk}; in Lie types different from type A, this uses rationality results from \cite{chaput.perrin:rationality}. As a result, a {\em Chevalley formula}, which calculates the multiplication by a divisor class, was obtained in \cite{buch.m:qk} for the type A Grassmannians, and it was recently extended in \cite{bcmp:qkchevalley} to all cominuscule Grassmannians. In the equivariant context, this formula determines an algorithm to calculate any product of Schubert classes, generalizing the result from quantum cohomology \cite{mihalcea:eqchev}. Formulas to calculate $N_{u,v}^{w,d}$ for $X=G/B$ and $d$ a ``line class", i.e. $d = [X(s_i)]$, were obtained in \cite{li.mihalcea:lines} by making use of the geometry of lines on flag manifolds. In this case they also proved that $N_{u,v}^{w,d}$ satisfy the same positivity property as the one proved by Anderson, Griffeth and Miller \cite{anderson.griffeth.miller} for the structure constants in the (ordinary) equivariant $K$-theory of $X$. There are also algorithms based on reconstruction formula \cite{lee.pandharipande,iritani.milanov.tonita} which in principle can be used to calculate KGW invariants. In practice, however, these lead to quantities which quickly become unfeasible for explicit calculations.

\section{$K$-homology of the affine Grassmannian}
\subsection{$K$-groups for thick and thin affine Grassmannians}
The foundational reference for the thick affine Grassmannian is \cite{Kas} and for the thin affine Grassmannian we use \cite{KumBook} and \cite{kumar:positivity}.

We use notation from Section \ref{sec:intro}.~The (thin) affine Grassmannian $\Gr$ is an ind-finite scheme: it is the union of finite-dimensional projective Schubert varieties $X_w$, for $w \in W_\af^-$ (in analogy with the Schubert varieties $X(w)$ for $G/B$).  The dimension of the complex projective variety $X_w$ is equal to the length $\ell(w)$.  Let $K_0^T(\Gr)$ be the Grothendieck group of the category of $T$-equivariant finitely-supported (that is, supported on some $X_w$) coherent sheaves on $\Gr$.  We have
$$
K_0^T(\Gr) \simeq \bigoplus_{w \in W_\af^-} R(T) \cdot \cO_w
$$
where $\cO_w = [\cO_{X_w}]_T$ denotes the class of the structure sheaf of $X_w$ (cf. \cite[Section 3]{kumar:positivity}).  We call the $R(T)$-module $K_0^T(\Gr)$ the {\it (equivariant) $K$-homology} of $\Gr$. We notice that $\xi_w^{\Gr}:= [\cO_{X_w}(-\partial X_w)]_T$, $w\in W_{\rm af}^-$, form another $R(T)$-basis of $K_0^T(\Gr)$, which we simply denote as $\xi_w$ whenever it is clear from the context.

The thick affine Grassmannian $\tGr$ is an infinite-dimensional non quasicompact scheme: it is a union of finite-codimensional Schubert varieties $X^w$, for $w \in W_\af^-$, of codimension $\ell(w)$.  Let $K^0_T(\tGr)$ be the Grothendieck group of the category of $T$-equivariant coherent sheaves on $\tGr$, defined for example in \cite[Section 3.2]{LSS}.  We have
$$
K_T^0(\tGr) \simeq \prod_{w \in W_\af^-} R(T) \cdot \cO^w
$$
where $\cO^w = [\cO_{X^w}]_T$ denotes the class of the coherent structure sheaf of $X^w$.

Let  $\Fl$ denote the (ind-)affine flag manifold, and $\overline{\Fl}$ denote the thick version. As for the affine Grassmannian, one defines Schubert varieties $X_w \subset \Fl$ and $X^w \subset \overline{\Fl}$ such that $\dim X(w) = \mathrm{codim}~X^w = \ell(w)$. In this case $w$ varies in the affine Weyl group $W_{\rm af}$. Let $\cO_w = [\cO_{X_w}] \in K_0^T(\Fl)$ and $\cO^w := [\cO_{X^w}] \in K^0_T(\overline{Fl})$; we refer to \cite{KS} or \cite{kumar:positivity} for the (rather delicate) details. There are $T$-equivariant projection maps $\pi: \Fl \to \Gr$ and (abusing notation) $\pi:\overline{\Fl} \to \overline{\Gr}$ which are locally trivial $G/B$-bundles. In particular they are flat, and
\begin{equation}\label{E:1} \pi^*\cO_{X^w_{\overline{\Gr}}}=\cO_{X_{\overline{\Fl}}^w} \end{equation}
for any $w\in W_{\rm af}^-$. {  Further, similar arguments to those in the finite case show that for any $w \in W_{\rm af}$,
\begin{equation}\label{E:2} \pi_*\cO_{X_w^{\Fl}}= \cO_{X^{\Gr}_{\pi(w)}} \/, \end{equation} where $\pi(w)$ denotes the image of $w$ in $W_{\rm af}^-$ under the projection map. (See e.g. \cite[Thm.~3.3.4]{brion.kumar:frobenius} for a proof based on Frobenius splitting; or \cite[Prop.~3.2]{bcmp:qkfinite} for an argument based on a theorem of Koll{\'a}r.)}
%
%
%
There is a pairing $\langle \cdot , \cdot \rangle_{\Fl} : K^0_T(\overline{\Fl}) \otimes K_0^T(\Fl) \to R(T)$ defined  in \cite[\S 3]{kumar:positivity} by
\begin{align}\label{eq:pairing1}
 \begin{split}
\langle \cdot , \cdot \rangle_{\Fl}:& K^0_T(\overline{\Fl}) \otimes K_0^T(\Fl) \longrightarrow R(T)\\
& \ip{[\cF]}{[\cG]}_{\Fl} : = \sum_i (-1)^i \chi_T((\Fl)_n, \Tor_i^{\cO_{\overline{\Fl}}}(\cF,\cG)),
 \end{split}
\end{align}
for any classes $[\cF], [\cG]$ such $\cF$ is a $T$-equivariant sheaf on $\overline{\Fl}$ and $\cG$ is a $T-$-equivariant sheaf supported on a finite dimensional stratum $(\Fl)_n$ of the ind-variety $\Fl$. By \cite[Lemma 3.4]{kumar:positivity} this pairing is well defined. In fact, the definition of this pairing extends in an obvious way to any partial flag variety, in particular to the affine Grassmannian $\Gr$. It was proved in \cite[Prop. 3.9]{baldwin.kumar:posII} that the pairing satisfies the property $\langle \cO^u_{\overline{\Fl}}, \xi_v^{\Fl} \rangle = \delta_{u,v}$.
%
We will need the following additional properties of this pairing.

\begin{lemma}\label{lemma:rich} Consider the pairing $\langle \cdot , \cdot \rangle_{\mathcal{X}}$ and take any $u, v \in W_{\af}$ in the case when $\mathcal{X} = \Fl$ and $u,v \in W_{\af}^-$ for $\mathcal{X} = \Gr$. Then  \[  \langle \cO^u, \cO_v \rangle_{\mathcal{X}} = \begin{cases} 1 & \textrm{ if } u \le v \/; \\ 0 & \textrm{ otherwise } \/. \end{cases} \]
\end{lemma}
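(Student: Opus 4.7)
The plan is to reduce the lemma to the known duality $\langle \cO^u, \xi_w\rangle_\cX = \delta_{u,w}$ of \cite[Prop.~3.9]{baldwin.kumar:posII} by first establishing the $K$-theoretic expansion
\[ \cO_v = \sum_{w \le v} \xi_w \qquad \text{in } K_0^T(\cX). \]
Granting this, $R(T)$-bilinearity of the pairing immediately yields
\[ \langle \cO^u, \cO_v \rangle_\cX = \sum_{w \le v} \langle \cO^u, \xi_w \rangle_\cX = \sum_{w \le v} \delta_{u,w}, \]
which is $1$ when $u \le v$ and $0$ otherwise. I would present the statement in this two-step fashion to keep the Tor computations in the definition of the pairing on the side of the cited duality, and to isolate the combinatorial content in the expansion.

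To prove the expansion, I would work inside the finite-dimensional projective Schubert variety $X_v$ (on which $\cO_v$ is supported) and then push forward to $\cX$. Choose a linear extension $v_0 = e, v_1, \ldots, v_N = v$ of the Bruhat interval $[e,v]$, and set $Z_i := \bigcup_{j \le i} X_{v_j}$. By the choice of ordering, every $u < v_i$ equals some $v_j$ with $j < i$, so $Z_{i-1} \cap X_{v_i} = \partial X_{v_i}$ set-theoretically. Combining the Mayer--Vietoris short exact sequence
\[ 0 \longrightarrow \cO_{Z_i} \longrightarrow \cO_{Z_{i-1}} \oplus \cO_{X_{v_i}} \longrightarrow \cO_{\partial X_{v_i}} \longrightarrow 0 \]
with the defining ideal-sheaf sequence $0 \to \xi_{v_i} \to \cO_{X_{v_i}} \to \cO_{\partial X_{v_i}} \to 0$ produces the recursion $[\cO_{Z_i}] = [\cO_{Z_{i-1}}] + \xi_{v_i}$ in $K_0^T(X_v)$. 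Iterating from $i=0$ (where $Z_0 = \{e\}$ gives $\cO_e = \xi_e$) to $i=N$ yields the claimed expansion $\cO_v = \sum_{i=0}^N \xi_{v_i}$.

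The main obstacle is justifying the Mayer--Vietoris short exact sequence at each step, which is equivalent to showing that the scheme-theoretic intersection $Z_{i-1} \cap X_{v_i}$ is the reduced subscheme $\partial X_{v_i}$. This compatibility of Schubert unions and intersections follows from the normality, Cohen--Macaulayness, and compatible Frobenius splitting of Schubert varieties in $G/B$, $\Fl$, and $\Gr$ (e.g.\ via the techniques of \cite{brion.kumar:frobenius}). Because every step of the argument is supported on a finite-dimensional closed Schubert subvariety of $\cX$, no additional difficulty arises in passing from the finite flag variety to the ind-schemes $\Fl$ and $\Gr$; the push-forward along the closed inclusion $X_v \hookrightarrow \cX$ transports the expansion to $K_0^T(\cX)$, and the same proof treats both cases uniformly.
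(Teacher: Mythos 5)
Your proof is correct, but it takes a genuinely different route from the paper's. The paper computes $\langle \cO^u, \cO_v \rangle_{\cX}$ directly from the Tor definition of the pairing: it invokes \cite[Lemma~5.5]{kumar:positivity} for the vanishing of higher $\Tor$'s, identifies $\Tor_0^{\cO_{\tX}}(\cO_{X^u}, \cO_{X_v})$ with the structure sheaf of the Richardson variety $X^u_v$ (which is empty when $u \not\le v$), and then uses the Kumar--Schwede vanishing $H^i(X^u_v, \cO_{X^u_v}) = 0$ for $i>0$ together with irreducibility to get $\chi_T(X^u_v, \cO_{X^u_v}) = 1$; for $\Gr$ an additional Leray spectral sequence argument along the fibration $\pi:\Fl \to \Gr$ reduces to the $\Fl$ case. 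Your argument instead proves the ideal-sheaf expansion $\cO_v = \sum_{w \le v} \xi_w$ first, via the Mayer--Vietoris filtration and the reducedness of scheme-theoretic Schubert intersections (a Frobenius-splitting consequence), and then pairs against $\cO^u$ using $\langle \cO^u, \xi_w\rangle_{\cX}=\delta_{u,w}$. Note that this reverses the logical dependency in the paper: there, the expansion appears as Lemma~\ref{lemma:xiO}(ii) and is a \emph{consequence} of Lemma~\ref{lemma:rich}, whereas you would make the expansion primary and Lemma~\ref{lemma:rich} a corollary. Both routes are sound; your approach has the advantage of treating $\Fl$ and $\Gr$ uniformly and offloading all Tor considerations to the cited duality of \cite[Prop.~3.9]{baldwin.kumar:posII}, while the paper's approach is shorter in text, relies on rather than re-proves the expansion, and in passing records the geometrically interesting fact that affine Richardson varieties have trivial structure-sheaf cohomology.
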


\begin{proof} Consider first $\mathcal{X} = \Fl$. By definition  we have
\[ \langle \cO^u, \cO_v \rangle_{\mathcal{X}} = \sum (-1)^i \chi_T(X_v, \mathcal{T}or_i^{\cO_{\overline{\mathcal{X}}}} (\cO_{X^u}, \cO_{X_v})) \/. \] According to \cite[Lemma 5.5]{kumar:positivity} all $Tor$ sheaves are $0$ for $i>0$, and by definition $\mathcal{T}or_0^{\cO_{\overline{X}}}(\cO_{X^u}, \cO_{X_v}) = \cO_{X_u^v}$ where $X^u_v:= X^u \cap X_v$ is the Richardson variety (cf. e.g. \S 2 of {\em loc.cit}). According to \cite[Cor. 3.3]{kumar.schwede}, the higher cohomology groups $H^i(X_v^u, \mathcal{O}_{X_v^u}) = 0$ for $i>0$ and since $X_v^u$ is irreducible $H^0(X_v^u, \mathcal{O}_{X_v^u}) = \C$. It follows that the sheaf Euler characteristic $\chi_T(X_v^u, \cO_{X_v^u}) =1$.

{  We now turn to the situation when $\mathcal{X} = \Gr$. Let $u, v \in W_{\rm af}^-$. The same argument as before reduces the statement to the calculation of $\chi_T(X_v^u, \cO_{X_v^u})$ where $X_v^u \subset \Gr$ is the Richardson variety. By definition of Schubert varieties, $\pi^{-1}(X_v) = X_{v w_0} \subset \Fl$ where $w_0 \in W$ is the longest element in the finite Weyl group, and $\pi^{-1}(X^u) = X^u$. It follows that the preimage of the grassmannian Richardson variety is $\pi^{-1}(X_v^u) = X_{v w_0}^u$. Then a standard argument based on the Leray spectral sequence (taking into account that the fiber of $\pi: \pi^{-1}(X_v^u) \to X_v^u$ is the finite flag manifold $G/B$, and that $H^i(G/B, \cO_{G/B}) = 0$ for $i>0$) gives that $H^i(X_{vw_0}^u, \cO_{X_{vw_0}^u}) = H^i(X_v^u, \cO_{X_v^u})$ for all $i$, thus the required Euler characteristic equals $1$, as needed.}
\end{proof}

\begin{lemma}\label{lemma:xiO}   For any $u, v \in W_{\af}^-$, we have
   $${\rm (i)}\quad   \langle \cO^u, \xi_v  \rangle_{\Gr} = \delta_{u,v}; \qquad {\rm (ii)}\quad \cO_v=\sum\nolimits_{w\leq v; w\in W_{\rm af}^-} \xi_w.$$
\end{lemma}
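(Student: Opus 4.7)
My plan is to prove (ii) first by a cellular stratification argument on $X_v$, and then deduce (i) from (ii) via Möbius inversion on the Bruhat order together with Lemma \ref{lemma:rich}.

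For (ii), I work inside the finite-dimensional projective Schubert variety $X_v \subset \Gr$. Fix a linear extension $w_1 < w_2 < \cdots < w_N = v$ of Bruhat order on the interval $\{w \in W_{\af}^- : w \le v\}$ and set $G_k := \bigcup_{j \le k} X_{w_j}$. The linear-extension property makes $\{w_1, \ldots, w_k\}$ downward closed, so each $G_k$ is closed in $X_v$; moreover the scheme-theoretic intersection $G_{k-1} \cap X_{w_k}$ agrees with the reduced subvariety $\partial X_{w_k}$, a consequence of the compatible Frobenius splitting of affine Schubert varieties. The Mayer--Vietoris short exact sequence
\[ 0 \to \cO_{G_k} \to \cO_{G_{k-1}} \oplus \cO_{X_{w_k}} \to \cO_{\partial X_{w_k}} \to 0 \]
identifies the ideal sheaf of $G_{k-1}$ in $G_k$ with the pushforward of $\xi_{w_k} = \cO_{X_{w_k}}(-\partial X_{w_k})$, yielding $[\cO_{G_k}] = [\cO_{G_{k-1}}] + \xi_{w_k}$ in $K_0^T(\Gr)$. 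Iterating from $G_1 = \{e\}$ (where $\cO_{G_1} = \xi_e$) up to $G_N = X_v$ gives (ii).

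For (i), set $A_{u,v} := \langle \cO^u, \xi_v \rangle_{\Gr}$. Pairing $\cO^u$ against (ii) and invoking Lemma \ref{lemma:rich} yields
\[ \mathbf{1}_{u \le v} \;=\; \langle \cO^u, \cO_v \rangle_{\Gr} \;=\; \sum_{w \le v,\, w \in W_{\af}^-} A_{u, w}. \]
Möbius inversion on the locally finite poset $(W_{\af}^-, \le)$ (Bruhat intervals being finite) then gives
\[ A_{u,v} \;=\; \sum_{u \le w \le v} \mu(w, v) \;=\; \delta_{u,v}, \]
the last equality being a defining property of the Möbius function.

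The main obstacle is justifying the reducedness identity $G_{k-1} \cap X_{w_k} = \partial X_{w_k}$ at each stage of the filtration, needed for the Mayer--Vietoris sequence to be exact; this relies on the compatible Frobenius splitting of affine Schubert varieties. Because everything takes place inside the finite-dimensional projective variety $X_v$, no ind-scheme subtleties arise, and the argument reduces to standard $K$-theory on a projective variety.
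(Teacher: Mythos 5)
Your argument is correct, but it reverses the logical order the paper uses and relies on different geometric input. The paper proves (i) first, simply citing \cite[Prop.~3.9]{baldwin.kumar:posII} (the same argument as for $\Fl$ applies to $\Gr$), and then deduces (ii) in two lines: write $\cO_v = \sum_w a_{w,v}\,\xi_w$ (a finite sum since $\cO_v$ is supported on $X_v$) and extract $a_{w,v} = \langle \cO^w, \cO_v\rangle_{\Gr}$ using (i), then evaluate by Lemma~\ref{lemma:rich}. You instead prove (ii) directly via a cellular filtration of $X_v$ and Mayer--Vietoris, which requires the Frobenius-splitting input that the scheme-theoretic intersections $G_{k-1}\cap X_{w_k}$ are reduced and equal $\partial X_{w_k}$ --- this is the correct and standard tool (compatible splitting of affine Schubert varieties as in \cite{brion.kumar:frobenius}), and your set-theoretic identification $G_{k-1}\cap X_{w_k}=\partial X_{w_k}$ using downward-closedness of the linear-extension prefix is right. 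Your derivation of (i) from (ii) by M\"obius inversion on the finite Bruhat interval, using the identity $\sum_{u\le w\le v}\mu(w,v)=\delta_{u,v}$, is also correct. The trade-off: the paper's route is shorter and defers the geometry to a cited result; your route is more self-contained (no appeal to Baldwin--Kumar for the duality) at the cost of the Frobenius-splitting machinery, and it has the side benefit of making the unipotent change-of-basis between $\{\cO_w\}$ and $\{\xi_w\}$ explicit.
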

\begin{proof}
  (i) The statement follows from the same arguments as for $\Fl$ in   \cite[Prop. 3.9]{baldwin.kumar:posII}.
  To prove (ii), we write $ \cO_v = \sum_w a_{w,v} \xi_w \/ $, which is a finite sum because
 the class $\cO_v$ is supported on a finite-dimensional variety. By statement (i) and Lemma \ref{lemma:rich},
 \begin{equation*}  a_{w,v} = \langle \cO^w, \cO_v \rangle_{\Gr} =  \begin{cases} 1 & \textrm{ if } w \le v, \\ 0 & \textrm{ otherwise. }\end{cases} \end{equation*}
\end{proof}
\subsection{$K$-Peterson algebra}\label{s:KPet}
The $K$-groups $K_0^T(\Gr)$ and $K^0_T(\tGr)$ acquire dual Hopf algebra structures from the homotopy equivalence $\Gr \simeq \Omega K$, where $K \subset G$ is a maximal compact subgroup and $\Omega K$ is the group of based loops into $K$.~An algebraic model for these Hopf algebras is constructed in \cite{LSS}.  Only the product structure of $K_0^T(\Gr)$, arising from the Pontryagin product $\Omega K \times \Omega K \to \Omega K$ will be of concern to us.

We consider a variation of Kostant and Kumar's $K$-nilHecke ring, the ``small torus" affine $K$-nilHecke ring of \cite{LSS}, which was inspired by the homological analogue \cite{P}.

The affine Weyl group $W_\af$ acts on the weight lattice $\Lambda$
of $T$ by the level-zero action (that is, we take the null root $\delta=0$)
$$
w t_\mu \cdot \la = w\cdot \la \qquad \mbox{for $w\in W$, $\mu\in Q^\vee$ and $\la\in \Lambda$.}$$
Let $I_\af$ denote the vertex set of the affine Dynkin diagram.  Abusing notation, we denote by $\{\alpha_i \mid i \in I_\af\}$ the images of the simple affine roots in $\Lambda$.  In particular, $\alpha_0 = -\theta \in \Lambda$ where $\theta$ is the highest root of $G$.  Let $Q(T) = \Frac(R(T))$ and equip $\K_Q = Q(T) \otimes_{R(T)} \Q[W_\af]$ with product $(p\otimes v)(q\otimes w) = p(v\cdot q) \otimes vw$ for $p,q\in Q(T)$ and $v,w\in W_\af$.  Define
\begin{align}\label{E:Tdef}
  T_i = (1-e^{\alpha_i})^{-1} (s_i - 1) \qquad \text{ for $i \in I_\af$.}
\end{align}
The $T_i$ satisfy $T_i^2=-T_i$ and the same braid relations as the $s_i$ do. Therefore for a reduced expression $w = s_{i_1}s_{i_2}\dotsm s_{i_\ell} \in W$ there are well defined elements $T_w = T_{i_1}T_{i_2}\dotsm T_{i_\ell}$.
Let $\K$ be the subring generated by $T_i$ for $i\in I_\af$ and $R(T)$.  We call it the {\it small-torus affine $K$-nilHecke ring}.


Let $\bL \subset \K$ be the centralizer of $R(T)$ in $\K$; this is called the $K$-Peterson subalgebra. The following theorem clarifies the geometric meaning of \cite[Theorems 5.3 and 5.4]{LSS}.  Recall that the ideal sheaf basis  $\{\xi_w \mid w \in W_\af^-\} \in K_0^T(\Gr)$ are the  unique elements characterized by $\ip{\cO^v}{\xi_w}_{\Gr} = \delta_{vw}$.

\begin{thm} \label{T:KPeterson}
There is an isomorphism of $R(T)$-Hopf algebras
$k:K_0^T(\Gr) \cong \bL$ such that for every $w\in W_\af^-$
\begin{enumerate}
\item[(a)]  the element $k_w := k(\xi_w)$
is the unique element in $\bL$ of the form
\begin{align}\label{eq:kw}
  k_w = T_w + \sum_{x\in W_\af\setminus W_\af^-} k_w^x T_x
\end{align}
where $k_w^x\in R(T)$, and
\item[(b)]  the element $l_w := k(\cO_w)$ is given by
\begin{equation}\label{eq:lw}
l_w = \sum_{v \leq w} k_w.
\end{equation}
\end{enumerate}
\end{thm}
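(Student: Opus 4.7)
The approach is to combine the algebraic results of \cite[Theorems 5.3 and 5.4]{LSS} with the geometric duality properties from Lemmas \ref{lemma:rich} and \ref{lemma:xiO}. In \cite{LSS}, the $K$-Peterson subalgebra $\bL$ is shown to carry a distinguished $R(T)$-basis $\{k_w \mid w \in W_\af^-\}$ characterized exactly by the upper-triangular expansion (a), and $\bL$ is identified as an $R(T)$-Hopf algebra with a combinatorial model of the $K$-homology of $\Gr$ whose Schubert basis is dual, under a Kostant--Kumar-style pairing with $\K$, to the classes $\{\cO^w\}$ on the thick side. The task is to identify that combinatorial model with the geometric $K_0^T(\Gr)$ in such a way that $\xi_w \mapsto k_w$.

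First I would define $k$ by setting $k(\xi_w) := k_w$ and extending $R(T)$-linearly; by \cite[Theorem 5.4]{LSS} the set $\{k_w\}$ is an $R(T)$-basis of $\bL$, so $k$ is automatically an $R(T)$-module isomorphism satisfying (a) by construction. To upgrade it to an $R(T)$-Hopf-algebra isomorphism, one must match the algebraic pairing between $\K$ and its relevant dual used in \cite{LSS} with the geometric pairing $\ip{\cdot}{\cdot}_{\Gr}$. Concretely, $\K$ acts on $K^0_T(\tGr)$ via $K$-theoretic Demazure operators (see \cite{kumar:positivity}), and under this action the Schubert classes $\cO^w$ are characterized by duality with $\{T_w \mid w \in W_\af^-\}$ in precisely the fashion used in \cite{LSS}. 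Combined with the defining property $\ip{\cO^v}{\xi_w}_{\Gr} = \delta_{v,w}$ of the ideal-sheaf basis, this forces $k_w$ to coincide with the image of $\xi_w$ under the algebraic isomorphism of \cite[Theorem 5.3]{LSS}, and then $k$ inherits the Hopf-algebra structure, including the identification of the Pontryagin product on $K_0^T(\Gr)$ (arising from $\Omega K \times \Omega K \to \Omega K$) with the product on $\bL$.

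Part (b) is then immediate: applying the $R(T)$-linear map $k$ to the identity of Lemma \ref{lemma:xiO}(ii), namely $\cO_v = \sum_{w \le v,\, w \in W_\af^-} \xi_w$, yields $l_v = k(\cO_v) = \sum_{w \le v,\, w \in W_\af^-} k_w$, which is the asserted formula. The main obstacle is the pairing identification in the second step: one must verify that the algebraic pairing implicit in \cite{LSS} --- defined via the action of $\K$ on the relevant $Q(T)$-dual module --- agrees with the geometric $K$-theoretic pairing from \cite{kumar:positivity}, which ultimately reduces to checking agreement on the two natural bases via localization at $T$-fixed points of $\Gr$ and $\tGr$. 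Once this compatibility is in place, the geometric meaning of \cite[Theorems 5.3 and 5.4]{LSS} asserted in the statement follows formally.
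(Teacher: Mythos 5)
Your proposal follows essentially the same route as the paper: both rely on \cite[Theorems 5.3 and 5.4]{LSS} for the algebraic structure, both use the fact that the geometric pairing satisfies $\ip{\cO^v}{\xi_w}_{\Gr}=\delta_{vw}$ while the algebraic pairing of \cite{LSS} picks out the coefficient of $T_v$ for $v\in W_\af^-$, and both derive part (b) by applying the $R(T)$-linear map $k$ to the identity $\cO_v=\sum_{w\le v}\xi_w$ from Lemma~\ref{lemma:xiO}(ii). The only cosmetic difference is that you define $k$ directly by $\xi_w\mapsto k_w$ and then argue compatibility, whereas the paper starts from the LSS isomorphism and deduces $k(\xi_w)=k_w$ by matching the two pairings; these are the same argument read in opposite directions.
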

\begin{proof}
In \cite{LSS}, a $K$-homology Hopf algebra $K_0^T(\Gr)$ was constructed as a Hopf dual to $K^0_T(\tGr)$.  In \cite[Theorem 5.3]{LSS}, an isomorphism $K_0^T(\Gr) \simeq \bL$ is constructed, and the $R(T)$-bilinear pairing $\ip{ \cdot }{ \cdot }_\bL: K^0_T(\tGr) \times \bL$ is given by \begin{equation}\label{eq:pairing2}
\ip{\cO^w}{a}_\bL = a_w \/, \end{equation} where $w \in W_\af^-$ and $a = \sum_{v \in W_\af} a_v T_v \in \bL \subset \K$ with $a_v \in R(T)$; see \cite[\S 2.4]{LSS}, especially equation (2.10). The uniqueness of the elements $k_w$ given by \eqref{eq:kw} is \cite[Theorem 5.4]{LSS}.

We now identify the $\bL$ with $K_0^T(\Gr)$ via \eqref{eq:pairing1} and \eqref{eq:pairing2}.  It follows from \cite[Theorem 5.4]{LSS} that under the resulting isomorphism $k:K_0^T(\Gr) \cong \bL$, we have $k(\xi_w) = k_w$.  Statement (b) follows immediately from Lemma \ref{lemma:xiO}.
\end{proof}

\subsection{Closed formula for structure constants}
For $x,y,z \in W_\af$, define the structure constants $c_{x,y}^z$ by
$$
\cO_x\cdot \cO_y = \sum_{z \in W_\af} c_{x,y}^z \cO_z
$$
with the product structure given by the isomorphism of Theorem \ref{T:KPeterson}.  We now give a closed formula for $c_{x,y}^z$ in terms of equivariant localizations.

Define the elements $y_i = 1 + T_i$ for $i \in I_\af$.  Then $y_i^2 = y_i$ and the $y_i$ satisfy the braid relations so that for $w \in W_\af$ we can define $y_w \in \K$.  The $\{y_w \mid w \in W_\af\}$ form a $R(T)$-basis of $\K$. For any   $q\in Q(T)$, we have {\upshape $qy_{s_i}=y_{s_i}(s_iq)+{q-s_iq\over 1-e^{-\alpha_i}} y_{\scriptsize\mbox{id}} .$}
 Define $b_{w,u} \in Q(T)$ and $e_{w, u} \in Q(T)$ respectively by\footnote{In the notation of \cite{KK:K}, they are denoted as $b_{u^{-1}, v^{-1}}$ and $e^{v^{-1}, u^{-1}}$ respectively.}
\begin{equation}\label{E:yw}
y_w = \sum_{u \in W_\af} b_{w,u} u,\qquad w=\sum_{u\in W_\af} e_{w, u}y_u \/. \end{equation}

The matrix $\big(b_{w, u}\big)$ is invertible, and its inverse is given by $\big(e_{w,u}\big)$.


\begin{prop}\label{claimcoeBBEE}
  Let {\upshape $u, v\in  W_{\scriptsize\mbox{af} }$}. Let
   $u=s_{\beta_1}\cdots s_{\beta_m}$ be a reduced expression of $u$. We have
     \begin{equation}\label{eqnbb}
        b_{u, v} =\sum \prod_{k=1}^m
             s_{\beta_1}^{\varepsilon_1}\cdots s_{\beta_{k-1}}^{\varepsilon_{k-1}}\big({(-e^{-\beta_k})^{\varepsilon_k}\over 1-e^{-\beta_k}}\big)\big|_{\alpha_0=-\theta},
     \end{equation}
           the summation over all   $(\varepsilon_1, \cdots,  \varepsilon_m)\!\in\!\! \{0, 1\}^m$ satisfying   $s_{\beta_1}^{\varepsilon_1}\cdots s_{\beta_m}^{\varepsilon_m}=v.$

      Denote $\gamma_j=s_{\beta_1}\cdots s_{\beta_{j-1}}(\beta_{j})$ for each $j$.   Then we have
         \begin{equation}\label{eqnee}
         e_{u, v} =\sum \prod_{k=1}^m\big((1-\varepsilon_k)e^{\gamma_k}+\varepsilon_k(1-e^{\gamma_k})\big)\big|_{\alpha_0=-\theta},
         \end{equation}
       the summation  over all   $(\varepsilon_1, \cdots,  \varepsilon_m)\!\in\!\! \{0, 1\}^m$ satisfying   $y_{s_{\beta_1}}^{\varepsilon_1}\cdots y_{s_{\beta_m}}^{\varepsilon_m}=y_v.$
    \end{prop}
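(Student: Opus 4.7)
Both formulas admit a direct proof by induction on $m$, the length of the chosen reduced expression of $u$. The starting point is a pair of dual one-step identities in the semidirect-product ring $\K_Q = Q(T) \rtimes W_\af$, obtained immediately from $y_i = 1 + T_i = 1 + (1-e^{\alpha_i})^{-1}(s_i - 1)$ by clearing denominators and solving for $s_i$:
\[
y_{s_i} \;=\; \frac{1}{1-e^{-\alpha_i}} \;+\; \frac{-e^{-\alpha_i}}{1-e^{-\alpha_i}}\, s_i, \qquad
s_i \;=\; e^{\alpha_i} \;+\; (1-e^{\alpha_i})\, y_{s_i}.
\]
Combined with the twisted product rule $v \cdot q = (v\cdot q)\, v$ for $q \in Q(T)$ and $v \in W_\af$ (under the level-zero action, which sends $\alpha_0 \mapsto -\theta$ and accounts for the specialization $|_{\alpha_0 = -\theta}$ in the statement), these identities reduce each induction step to a routine manipulation.

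For \eqref{eqnbb}, I would factor $y_u = y_{u'} \cdot y_{s_{\beta_m}}$ with $u' = s_{\beta_1}\cdots s_{\beta_{m-1}}$, apply the inductive expansion of $y_{u'}$ in the group basis, and expand $y_{s_{\beta_m}}$ using the first identity above. When a scalar coefficient of $y_{s_{\beta_m}}$ is moved past a group element $v' = s_{\beta_1}^{\varepsilon_1}\cdots s_{\beta_{m-1}}^{\varepsilon_{m-1}}$ via the semidirect product rule, it is acted upon precisely by that partial word; this appends the missing factor $s_{\beta_1}^{\varepsilon_1}\cdots s_{\beta_{m-1}}^{\varepsilon_{m-1}}\bigl((-e^{-\beta_m})^{\varepsilon_m}/(1-e^{-\beta_m})\bigr)$ to the inductive product and multiplies the group part on the right by $s_{\beta_m}^{\varepsilon_m}$, where $\varepsilon_m \in \{0,1\}$ records whether one took the constant piece or the reflection piece of $y_{s_{\beta_m}}$. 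Summing over all $\varepsilon$ with $s_{\beta_1}^{\varepsilon_1}\cdots s_{\beta_m}^{\varepsilon_m} = v$ produces the claimed closed form.

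For \eqref{eqnee}, I would write $u = u' \cdot s_{\beta_m}$ and substitute the second identity. Commuting the scalars $e^{\alpha_{\beta_m}}$ and $1-e^{\alpha_{\beta_m}}$ past $u'$ gives
\[
u \;=\; e^{\gamma_m}\, u' \;+\; (1-e^{\gamma_m})\, u'\, y_{s_{\beta_m}},
\]
since $u'(\alpha_{\beta_m}) = \gamma_m$ under the level-zero action. Substituting the inductive expansion $u' = \sum_{\varepsilon'} c_{\varepsilon'}\, y_{v(\varepsilon')}$ and multiplying on the right by $y_{s_{\beta_m}}$ in the second summand uses the identity $y_{v(\varepsilon')}\cdot y_{s_{\beta_m}} = y_{v(\varepsilon') \ast s_{\beta_m}}$, where $\ast$ denotes the Demazure (0-Hecke) monoid product; this follows from $y_i^2 = y_i$ together with the braid relations for the $y_i$. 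Packaging the $\varepsilon_m = 0$ and $\varepsilon_m = 1$ contributions into the single factor $(1-\varepsilon_m)e^{\gamma_m} + \varepsilon_m(1-e^{\gamma_m})$, and observing that the group element now attached to the resulting $y$-term is exactly $y_{s_{\beta_1}}^{\varepsilon_1}\cdots y_{s_{\beta_m}}^{\varepsilon_m}$, one reads off \eqref{eqnee}.

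The main conceptual subtlety lies in \eqref{eqnee}: the idempotency $y_i^2 = y_i$ makes the map $\varepsilon \mapsto y_{s_{\beta_1}}^{\varepsilon_1}\cdots y_{s_{\beta_m}}^{\varepsilon_m}$ highly non-injective, and the natural indexing of the sum is by the 0-Hecke monoid rather than by $W_\af$. Phrasing the inductive right-multiplication by $y_{s_{\beta_m}}$ through the Demazure product is what makes the bookkeeping close. Apart from this, both arguments amount to careful tracking of scalar factors together with the level-zero specialization $\alpha_0 = -\theta$ already built into the action on $\K_Q$.
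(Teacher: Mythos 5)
Your argument is correct, and for \eqref{eqnee} it takes a slightly different route from the paper's. The paper proves \eqref{eqnee} by showing that $e_{u,v}$ and the conjectured expression satisfy a common recursion obtained by left-multiplying $u$ by $s_i$ (written in the $y$-basis as $e^{\alpha_i}y_{\mathrm{id}} + (1-e^{\alpha_i})y_{s_i}$); this requires the commutation identity $q\,y_{s_i} = y_{s_i}(s_i q) + \frac{q - s_i q}{1-e^{-\alpha_i}}y_{\mathrm{id}}$ to push scalar coefficients past $y_{s_i}$, followed by a case analysis on whether $s_iv<v$, and a check of the base case $e_{\mathrm{id},v}=\delta_{\mathrm{id},v}$. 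Your version instead peels off the last letter, $u = u' s_{\beta_m}$, substitutes $s_{\beta_m} = e^{\beta_m} + (1-e^{\beta_m})y_{s_{\beta_m}}$ and commutes the scalars past the \emph{group element} $u'$ (producing $e^{\gamma_m}$, $1-e^{\gamma_m}$), so that all scalars stay to the left; then $y_{v'}y_{s_{\beta_m}} = y_{v' \ast s_{\beta_m}}$ via the $0$-Hecke (Demazure) product closes the induction. This avoids the $qy_{s_i}$-commutation formula and the case split on $v$ entirely, and it makes the Demazure-monoid indexing $y_{s_{\beta_1}}^{\varepsilon_1}\cdots y_{s_{\beta_m}}^{\varepsilon_m}=y_v$ in the statement transparent rather than implicit. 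For \eqref{eqnbb} the paper says it follows immediately from the definitions; your right-multiplication expansion of $y_u = y_{u'}y_{s_{\beta_m}}$, using $y_{s_i} = \frac{1}{1-e^{-\alpha_i}} + \frac{-e^{-\alpha_i}}{1-e^{-\alpha_i}}s_i$ and commuting the new scalar past $v' = s_{\beta_1}^{\varepsilon_1}\cdots s_{\beta_{m-1}}^{\varepsilon_{m-1}}$, is exactly the spelled-out form of that observation and is correct.
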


In the present work, we work in $Q(T)$, where $T \subset G$ is the finite torus.  Our proof below holds in $Q(T_\af)$ where $T_\af$ denotes the affine torus.

\begin{proof}
The formula for $b_{u, v}$ follows immediately from the definitions.

The formula for $e_{u, v}$ holds by showing that both sides satisfy the same recursive formulas.  Precisely, let $\tilde{e}_{u, v}$ denote the RHS of \eqref{eqnee}. We shall show that $e_{u, v}$ and $\tilde{e}_{u, v}$ satisfy the same recursions.
  We have
   \begin{align*}
      {s_iu}&= (e^{\alpha_i} y_{\rm id}+(1-e^{\alpha_i})y_{s_i})\sum_v e_{u, v}y_v\\
                    &= \sum_ve^{\alpha_i}e_{u, v} y_{v}+\sum_v(1-e^{\alpha_i})y_{s_i} e_{u, v}y_v\\
                    &= \sum_ve^{\alpha_i}e_{u, v} y_{v}+\sum_v(1-e^{\alpha_i})(s_i(e_{u,v})y_{s_i}-{ s_i(e_{u, v})- e_{u, v}\over 1-e^{-\alpha_i}} y_{\rm id})y_v\\
                    &=\sum_ve^{\alpha_i}e_{u, v} y_{v}+\sum_v(1-e^{\alpha_i})s_i(e_{u,v})y_{s_i}y_v+\sum_v e^{\alpha_i}  (s_i(e_{u, v})-e_{u, v}) y_v\\
                    &=\sum_{v: s_iv<v}\big(e^{\alpha_i}e_{u, v}+(1-e^{\alpha_i})s_i(e_{u,v})+(1-e^{\alpha_i})s_i(e_{u,s_iv})+ e^{\alpha_i}(s_i(e_{u, v})-e_{u, v})\big)y_{v} \\
                    &\qquad+\sum_{v: s_iv>v}\big(e^{\alpha_i}e_{u, v}  + e^{\alpha_i}(s_i(e_{u, v})-e_{u, v})\big)y_{v}\\
                    &=\sum_{v: s_iv<v}\big(  s_i(e_{u,v})+(1-e^{\alpha_i})s_i(e_{u,s_iv}) \big)y_{v}
                             +\sum_{v: s_iv>v}  e^{\alpha_i}(s_i(e_{u, v}) y_{v})\\
   \end{align*}
 That is, for $s_iu<u$, we have
  \begin{equation}\label{eerecur}
     e_{s_iu, v}=\begin{cases}
        s_i(e_{u,v})+(1-e^{\alpha_i})s_i(e_{u,s_iv}),&\mbox{if } s_iv<v,\\
         e^{\alpha_i}s_i(e_{u, v}),&\mbox{if } s_iv>v.
   \end{cases}
  \end{equation}
 It follows directly from \eqref{eqnee} that   $\tilde{e}_{s_iu, v}$ satisfies the same recursive rule. Moreover, $e_{{\rm id}, v}=\delta_{{\rm id}, v}=\tilde{e}_{{\rm id}, v}$ for any $v$. Therefore the statement follows.
\end{proof}

Consider the left $Q(T)$-module homomorphism $\kk:Q(T) \otimes_{R(T)} \K \to Q(T) \otimes_{R(T)} \bL$ defined by
\begin{align*}
  \kk(t_\lambda w) = t_\lambda\qquad\text{for $w\in W$ and $\lambda\in Q^\vee$.}
\end{align*}

\begin{prop} \label{P:kappa}
The map $\kk$ restricts to a $R(T)$-module map $\kk:\K \to \bL$, and
\begin{align*}
\kk(T_u)&=0&&\text{if $u\in W_\af\backslash W_\af^-$.} \\
\kk(T_u)&=k_u&&\text{if $u\in W_\af^-$.} \\
\kk(y_u)&=l_u &&\text{if $u\in W_\af^-$.}
\end{align*}
\end{prop}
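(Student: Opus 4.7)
The strategy is to exploit the description of $\bL$ as the centralizer of $R(T)$ in $\K$, together with the uniqueness part of Theorem~\ref{T:KPeterson}(a). A direct centralizer calculation at the $Q(T)$-level, using that only translations $t_\mu$ act trivially on $R(T)$ under the level-zero action, shows
\[
Q(T)\otimes_{R(T)}\bL \;=\; Q(T)[Q^\vee],
\]
so after localization $\bL$ coincides with the translation subalgebra. In particular $\bL\subseteq Q(T)[Q^\vee]$, and since $\kk$ is $Q(T)$-linear with $\kk(t_\mu)=t_\mu$, the map $\kk$ restricts to the identity on $\bL$.

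The technical core is the following sublemma, which I would prove first: for every $x\in Q(T)\otimes_{R(T)}\K$ and every finite simple index $i\in I_\af\setminus\{0\}$,
\[
\kk(xT_i) \;=\; 0.
\]
To verify it, write $x=\sum_{u\in W_\af}a_u\,u$ with $a_u\in Q(T)$ and use the twisted product to obtain
\[
xT_i \;=\; \sum_{u\in W_\af} a_u\,(1-e^{u\alpha_i})^{-1}\,(us_i-u).
\]
If $u=t_\mu w$ with $w\in W$, then $us_i=t_\mu(ws_i)$ has the same translation part as $u$ because $ws_i\in W$; hence $\kk(us_i)=\kk(u)$ termwise, proving the sublemma. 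This is the only step requiring genuine structural input.

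Given the sublemma, the two $T$-formulas are now immediate. If $u\in W_\af\setminus W_\af^-$, some index $i\in I_\af\setminus\{0\}$ satisfies $us_i<u$, and writing the reduced expression $u=(us_i)s_i$ gives $T_u=T_{us_i}T_i$, so $\kk(T_u)=0$. If instead $u\in W_\af^-$, Theorem~\ref{T:KPeterson}(a) yields $k_u = T_u + \sum_{x\in W_\af\setminus W_\af^-} k_u^x T_x$ with $k_u\in\bL$; applying $\kk$ and using both the sublemma and $\kk|_{\bL}=\mathrm{id}$ collapses this to $\kk(T_u)=k_u$. Since every element of $\K$ is an $R(T)$-linear combination of the $T_u$'s, this also shows $\kk(\K)\subseteq\bL$, so $\kk$ restricts as claimed.

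Finally, the $y_u$ identity follows from the standard $0$-Hecke expansion $y_u=\sum_{v\le u}T_v$, obtained by expanding $\prod_j(1+T_{i_j})$ along a reduced word for $u$ and applying the relations $T_i^2=-T_i$ together with the braid relations. Combining with the $T$-case and Theorem~\ref{T:KPeterson}(b),
\[
\kk(y_u) \;=\; \sum_{v\le u}\kk(T_v) \;=\; \sum_{\substack{v\le u\\ v\in W_\af^-}} k_v \;=\; l_u.
\]
The main obstacle, such as it is, is spotting that a right factor of $T_i$ with $i$ in the finite part annihilates $\kk$; from there every statement in the proposition follows formally from Theorem~\ref{T:KPeterson}.
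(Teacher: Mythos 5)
Your proof is correct, and it takes a genuinely different route from the paper's for the middle identity $\kk(T_u) = k_u$. The paper invokes an explicit characterization from [LSS, (5.1)], which says the coefficients $a'_{t_\lambda}$ of $k_u$ in the translation basis satisfy $\sum_x a_x f(x) = \sum_\lambda a'_{t_\lambda} f(t_\lambda)$ for every right-$W$-invariant $R(T)$-valued test function $f$; applying this to indicator functions of cosets $t_\lambda W$ gives exactly $a'_{t_\lambda} = \sum_{x\in t_\lambda W}a_x$, i.e.\ $\kk(T_u)=k_u$. Your argument instead combines two observations already available in the paper: (i) from the centralizer picture $\bL\subseteq Q(T)[Q^\vee]$, so $\kk$ acts as the identity on $\bL$, and (ii) the triangularity $k_u = T_u + \sum_{x\notin W_\af^-}k_u^x T_x$ from Theorem~\ref{T:KPeterson}(a). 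Applying $\kk$ and using the vanishing $\kk(T_x)=0$ for $x\notin W_\af^-$ immediately collapses (ii) to $k_u = \kk(k_u) = \kk(T_u)$. This is more self-contained (it re-uses a theorem already quoted in the paper rather than citing a separate formula from \cite{LSS}) and makes the logical dependence clearer. Your sublemma that $\kk(xT_i)=0$ for every $x\in\K_Q$ and finite $i$ is also a genuine improvement in rigor over the paper's ``it follows easily that $\kk(T_u)=0$'': since $\kk$ is only a left-module map and not a ring map, the fact that $\kk(T_i)=0$ does not by itself give $\kk(T_{us_i}T_i)=0$; one really does need the stronger statement you prove, obtained by pushing the scalar $(1-e^{u\alpha_i})^{-1}$ to the left and observing that $us_i$ and $u$ share the same translation part. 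The final $y_u$ step and the restriction claim are handled the same way as in the paper.
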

\begin{proof}
The first claim follows from the three formulas.  From the definition, $\kk(T_i) = 0$ for $i \neq 0$.  It follows easily that $\kk(T_u) = 0$ if $u \notin W_\af^-$.  By \cite[(5.1)]{LSS}, the element $k_u \in \bL$ can be characterized as follows.  Let $T_u = \sum_{x \in W_\af} a_x\, x$ for $a_x \in Q(T)$ and $k_u = \sum_{\lambda \in Q^\vee} a'_{t_\lambda} \, t_\lambda$ for $a'_{t_\lambda} \in Q(T)$.  Then for any function $f: W_\af \to R(T)$ satisfying $f(x) = f(xv)$ for $v \in W$, we have
$$
\sum_{x \in W_\af} a_x f(x) = \sum_{\lambda \in Q^\vee} a'_{t_\lambda} f(t_\lambda).
$$
It follows that $\kk(T_u) = k_u$.  The last claim follows from the first two formulas, the equality $y_w = \sum_{v \in W_\af,\, v \leq w} T_v$ and \eqref{eq:lw}.
\end{proof}

Denote
  $$b_{x, [y]}:= \sum_{z\in yW}b_{x, z},\qquad e_{x, [y]}:=\sum_{z\in yW}e_{x, z}.$$
\begin{thm}\label{T: coeffc} For any $x, y, z\in W_\af^-$, the coefficient $c_{x, y}^z$ is given by
\begin{equation}\label{eq:bbe}
   c_{x,y}^z=\sum_{t_1, t_2\in Q^\vee}b_{x, [t_1]}b_{y, [t_2]}e_{t_1t_2, [z]}.
\end{equation}
\end{thm}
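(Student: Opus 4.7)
The plan is to compute the product $l_x \cdot l_y = \sum_z c_{x,y}^z\, l_z$ directly in $\bL$, extracting the coefficient of $l_z$ by switching between two convenient expansions with the help of the map $\kk$ of Proposition \ref{P:kappa}. The first step is to expand each $l_x$ in the translations $\{t_\la\}_{\la\in Q^\vee}$. Since $l_x = \kk(y_x)$ and $y_x = \sum_v b_{x,v}\, v$, while $\kk(t_\la w) = t_\la$ for $w\in W$, this gives $l_x = \sum_{\la\in Q^\vee} b_{x,[t_\la]}\, t_\la$, and similarly for $l_y$. Multiplying the two expansions, and using that the level-zero action of $t_\la$ on $Q(T)$ is trivial and that $t_\la t_\mu = t_{\la+\mu}$, produces
\[
l_x\, l_y \;=\; \sum_{\la,\mu\in Q^\vee} b_{x,[t_\la]}\, b_{y,[t_\mu]}\, t_{\la+\mu}.
\]

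The main step is to convert the translations back to the $l$-basis by proving
\[
t_\nu \;=\; \sum_{z\in W_\af^-} e_{t_\nu,[z]}\, l_z.
\]
Starting from $t_\nu = \sum_u e_{t_\nu,u}\, y_u$ and applying $\kk$, which fixes $t_\nu\in\bL$, this reduces to the identity $\kk(y_u) = l_{u^-}$ for every $u\in W_\af$, where $u = u^-\cdot w$ is the factorization with $u^-\in W_\af^-$ the minimal-length representative of the coset $uW$ and $w\in W$. Since $\ell(u) = \ell(u^-)+\ell(w)$ in this factorization, one has $y_u = y_{u^-}\, y_w$. After expanding both factors in the $W_\af$-basis and observing that right-multiplication by $u'\in W$ does not alter the translation part (so $\kk(vu') = \kk(v)$), the $u'$-sum in $\kk(y_{u^-}y_w)$ collapses to the counit value $\epsilon(y_w)$, where $\epsilon : \K_Q \to Q(T)$, $v\mapsto 1$, is the natural algebra homomorphism from the twisted group algebra to its coefficient ring. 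Because $\epsilon(T_i) = (1-e^{\alpha_i})^{-1}(1-1) = 0$, we have $\epsilon(y_i) = 1$ for every $i\in I_\af$ and hence $\epsilon(y_w) = 1$ for all $w\in W_\af$. This yields $\kk(y_u) = \kk(y_{u^-}) = l_{u^-}$, and regrouping $\sum_u e_{t_\nu,u}\, l_{u^-}$ by the coset $zW$ produces the displayed expansion of $t_\nu$.

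Substituting this expansion into the product formula for $l_x l_y$, with $\nu = \la+\mu$ so that $t_\nu = t_\la t_\mu$, and comparing coefficients of $l_z$ yields
\[
c_{x,y}^z \;=\; \sum_{t_1,t_2\in Q^\vee} b_{x,[t_1]}\, b_{y,[t_2]}\, e_{t_1 t_2,[z]},
\]
as claimed. I expect the main obstacle to be the identity $\kk(y_u) = l_{u^-}$ for $u\notin W_\af^-$; every other step is direct bookkeeping from the definitions. The clean way to see the identity is to invoke the counit $\epsilon$ to collapse $y_w$ to $1$, thereby reducing the assertion to the case $u = u^-$ that is already handled by Proposition \ref{P:kappa}.
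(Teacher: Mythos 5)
Your proposal is correct and follows essentially the same route as the paper: expand $l_x=\kk(y_x)$ and $l_y$ in the translation elements $t_\la$ using the $b$-coefficients, multiply using $t_\la t_\mu=t_{\la+\mu}$ and the triviality of the level-zero action of translations on $Q(T)$, and convert back to the $l_z$-basis via the $e$-coefficients. The one point you elaborate that the paper leaves implicit is the identity $\kk(y_u)=l_{u^-}$ for $u\notin W_\af^-$, and your argument for it (length-additive factorization $y_u=y_{u^-}y_w$, $\kk(vu')=\kk(v)$ for $u'\in W$, and $\epsilon(y_w)=1$ — noting only that $\epsilon$ is the evaluation of the $\K_Q$-action on $1\in Q(T)$ rather than a genuine algebra homomorphism, which still gives $\epsilon(y_w)=1$ by induction) is a correct and worthwhile addition.
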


\begin{proof} By Theorem \ref{T:KPeterson}, we have
$ l_xl_y= \sum_{z\in W_\af^-}c_{x,y}^zl_z$.
By Proposition \ref{P:kappa}, we have
   \begin{align*}
       l_xl_y&=\sum_{u, v\in W_\af} \kappa(b_{x, u} u)\kappa(b_{y, v}v)\\
       &=\sum_{t_1, t_2\in Q^\vee}\sum_{u, v\in W} b_{x, t_1u}b_{y, t_2v}\kappa(t_1u)\kappa(t_2v) \\
       &=\sum_{t_1, t_2\in Q^\vee}\sum_{u, v\in W} b_{x, t_1u}b_{y, t_2v} t_1 t_2  \\
        &=\sum_{z\in W_\af^-}\sum_{t_1, t_2\in Q^\vee} b_{x, [t_1]}b_{y, [t_2]} e_{t_1 t_2, [z]}l_z. \qedhere
   \end{align*}
\end{proof}

{  \subsection{Geometric remarks} We will provide a brief geometric interpretation of the previous approach.~There is an $R(T)$-module identification $\mathbb{K} = K_0^T(\Fl)$ and an $R(T)$-Hopf algebra identification $\mathbb{L} = K_0^T(\Gr)$.~The classes $y_w \in \K$ play two roles: on one side $y_w = \cO_w$ are the
structure (finite dimension) Schubert structure sheaves on the affine flag manifold $\Fl$; on the other side they {\em act} on $K^0_T(\Fl)$ as the K-theoretic BGG operators $\partial_w$ - see \cite[Lemma 2.2]{LSS}. Similarly, the elements $T_w \in \K$ correspond to the ideal sheaves $\xi_w$ on $K_0(\Fl)$, or to the BGG-type operators $\partial_w - id$. The map $\kappa: \K \to \bL$ is the K-theoretic projection map $\pi_*: K_0^T(\Fl) \to K_0^T(\Gr)$, and the classes $k_w$ and $l_w$ (for $w \in W_{\rm af}^-$) correspond respectively to the ideal sheaves and Schubert structure sheaves in the affine Grassmannian. In particular, Proposition \ref{P:kappa} states that $$\pi_*(\xi_w^{\Fl}) = \begin{cases} \xi_{w} & \textrm{if } w \in W_{\af}^- \\ 0 & \textrm{ otherwise } \end{cases};\qquad \pi_*(\cO_u^{\Fl})=\cO_u\,\, \mbox{ for }\,\,u\in W_{\rm af}^-.$$
It is not difficult to prove these identities directly, using Lemma \ref{lemma:xiO} and identities (\ref{E:1}), (\ref{E:2}).

For each of $K_0^T(\Fl)$ and $K_0^T(\Gr)$, there is a third basis $ \{ \iota_w \}$, indexed respectively by $W_{\rm af}$ and by $W_{\rm af}/ W$, called the {\em localization basis}. If $w \in W_{\rm af}$ then $\iota_w \in K_0^T(\Fl)$ is the map $\iota_w: K^0_T(\Fl) \to R(T)$ defined by sending the $K$-cohomology class $\cO^u$ to its localization to the fixed point $w$. Then equation (\ref{E:yw}) above corresponds to expanding the structure sheaf basis into localization basis and viceversa. A key observation from \cite{LL} and \cite{LSS}, which is used in the proof of Theorem \ref{T: coeffc}, is that the Pontryagin multiplication on $K_0^T(\Gr)$ is easy to write in the localization basis: if $\lambda, \mu \in Q^\vee$ and $\iota_{t_\lambda}, \iota_{t_\mu} \in K_0^T(\Gr)$ are the corrsponding localization elements, then $\iota_{t_\lambda} \cdot \iota_{t_\mu} = \iota_{t_{\lambda + \mu}}$; see \cite[Lemma 5.1]{LSS}.}

 \section{Data and Evidence} {  As we observed above, the cohomological versions of Conjectures \ref{conj:constants} and \ref{conj:isomorphism} were proved in \cite{LS}. In the $K$-theoretic version, we can verify Conjecture \ref{conj:constants} when the degree $d$ in $N_{u, v}^{w, d}$ is $d=0$ or $d= \alpha_i^\vee$ is a simple coroot. Our arguments are similar to those in \cite{LL}, but are quite involved, even in these situations. It would be desirable to find more conceptual explanations.}
Next we provide two computational examples.
\subsection{Conjecture is true for $G=SL_2$}
The complete flag manifold $SL_2/B$ is the complex projective line $\mathbb{P}^1$. The Weyl group $W=\mathbb{Z}_2$   is generated by the   simple reflection $s_1=s_{\alpha}$ of the unique simple root $\alpha=\alpha_1$.  The equivariant quantum $K$-theory $QK_T(\mathbb{P}^1)$ has an $R(T)[q]$-basis $\{\cO^{\rm id}, \cO^{s_1}\}$. As shown in \cite{buch.m:qk}, the only nontrivial quantum product is given by\footnote{We   use the opposite identification $e^{\varepsilon_i}=-[\mathbb{C}_{\varepsilon_i}]\in R(T)$ compared with \cite[Section 5.5]{buch.m:qk}.}
 \begin{equation}\label{qKP1}
    \cO^{s_1}\star \cO^{s_1}=(1-e^{-\alpha}) \cO^{s_1}+e^{-\alpha} q.
 \end{equation}

On the affine side, we notice that $s_0=s_1t_{-\alpha^\vee}$ and that $$W_{\rm af}^-=\{\mbox{id}\}\cup \{wt_{n \alpha^\vee}~|~ n\in \mathbb{Z}_{<0}, w={\rm id} \mbox{ or } s_1\}.$$
Let $g_m$ be the unique element of $W_\af^-$ of length $m$ for $m\ge0$.
Let $h_m$ be the unique element of $W_\af\setminus W_\af^-$ of length $m$ for $m \ge 1$.  For example, $g_3 = s_0s_1s_0$ and $h_4 = s_0 s_1 s_0 s_1$.  Notice that $T_i f=s_i(f)T_i+T_i(f)$ and $T_i^2=-T_i$ for any $f\in R(T)$ and $i\in \{0,1\}$.

\begin{lemma} \label{lemma:kx} We have $k_{\rm id}=1$.
For $r\ge1$, we have
$$
	k_{g_{2r-1}} = T_{g_{2r-1}} + T_{h_{2r-1}} + (1-e^{-\alpha}) T_{h_{2r}} \qquad \text{and} \qquad
    k_{g_{2r}} = T_{g_{2r}} + e^{-\alpha} T_{h_{2r}}.
$$
\end{lemma}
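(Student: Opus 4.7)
By Theorem~\ref{T:KPeterson}(a), for each $w\in W_\af^-$ the element $k_w$ is the unique element of $\bL$ of the form $T_w+\sum_{x\in W_\af\setminus W_\af^-}c_x T_x$ with $c_x\in R(T)$. For $G=SL_2$ every positive-length element of $W_\af$ is either $g_m$ (ending in $s_0$, so in $W_\af^-$) or $h_m$ (ending in $s_1$, so outside $W_\af^-$), and the two proposed expressions each contain exactly one term $T_v$ with $v\in W_\af^-$, namely $T_{g_m}$, with coefficient $1$. Hence the whole task reduces to verifying that the proposed expressions commute with $R(T)$.

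Since $\Lambda=\Z\omega$ is rank one, commutation with $R(T)=\Z[\Lambda]$ is equivalent to commutation with the single element $e^\omega$. The rule $T_i f=s_i(f)T_i+(s_i(f)-f)/(1-e^{\alpha_i})$, combined with $\alpha_0=-\alpha$, $\theta=\alpha$, and the level-zero identity $s_0\cdot e^\lambda=s_1\cdot e^\lambda$, gives the compact formulas
\begin{align*}
T_0\,e^\omega=e^{-\omega}T_0-e^\omega,\qquad T_1\,e^\omega=e^{-\omega}T_1+e^{-\omega},
\end{align*}
with mirrored formulas for $e^{-\omega}$. A short expansion yields $[T_1T_0,e^\omega]=-e^{-\omega}(T_0+T_1+1)$ and $[e^{-\alpha}T_0T_1,e^\omega]=e^{-\omega}(T_0+T_1+1)$, whose sum vanishes; this proves the base case $k_{g_2}=T_1T_0+e^{-\alpha}T_0T_1\in\bL$, and a parallel computation handles $k_{g_1}$.

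For $r\ge 2$ I proceed by induction, exploiting the purely alternating structure $g_{2r}=(s_1s_0)^r$, $h_{2r}=(s_0s_1)^r$, $g_{2r-1}=(s_0s_1)^{r-1}s_0$, $h_{2r-1}=(s_1s_0)^{r-1}s_1$. These yield factorizations such as $T_{g_{2r+2}}=T_{g_{2r}}\cdot T_1T_0$ and $T_{h_{2r+2}}=T_{h_{2r}}\cdot T_0T_1$. Writing the proposed $k_{g_{2r+2}}=T_{g_{2r}}\cdot T_1T_0+e^{-\alpha}T_{h_{2r}}\cdot T_0T_1$ and commuting $e^\omega$ to the right through these factored forms, the inductive hypothesis $[k_{g_{2r}},e^\omega]=0$ collapses the calculation to the rank-one identities already established in the base case. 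The odd case $k_{g_{2r+1}}$ is handled in the same spirit, using $T_{g_{2r+1}}=T_0 T_{g_{2r}}$ and $T_{h_{2r+1}}=T_1 T_{h_{2r}}$, together with $T_i^2=-T_i$.

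The main obstacle is bookkeeping: each application of $T_i f=s_i(f)T_i+(\text{scalar})$ produces boundary terms of the form $(\text{element of }R(T))\cdot T_y$ with $y$ of strictly smaller length, and one must track how these collapse pairwise once the prescribed coefficients $e^{-\alpha}$ and $1-e^{-\alpha}$ are inserted on the correct summands. Conceptually these coefficients are forced: they are the unique scalars making $[T_{g_m},e^\omega]$ cancel against the contributions from $T_{h_{m-1}}$ and $T_{h_m}$. In essence the lemma is the rank-one specialization of the general algorithm producing $k_w$ implicit in the proof of \cite[Theorem~5.4]{LSS}.
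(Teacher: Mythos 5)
Your high-level strategy is the same as the paper's: by the uniqueness in Theorem~\ref{T:KPeterson}(a), it suffices to check that the proposed expressions lie in $\bL$, i.e.\ commute with $R(T)$, and you do this by induction on length. Your rank-one commutation formulas for $T_0 e^\omega$ and $T_1 e^\omega$ are correct, and your verification that $[T_1T_0 + e^{-\alpha}T_0T_1, e^\omega]=0$ establishes the base case $k_{g_2}\in\bL$.

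The gap is in the inductive step, which you assert but do not carry out. You write $k_{g_{2r+2}} = T_{g_{2r}}T_1T_0 + e^{-\alpha}T_{h_{2r}}T_0T_1$ and claim that commuting $e^\omega$ through and invoking $[k_{g_{2r}},e^\omega]=0$ ``collapses the calculation.'' But the inductive hypothesis only controls the combination $T_{g_{2r}}e^\omega + e^{-\alpha}T_{h_{2r}}e^\omega = e^\omega T_{g_{2r}} + e^{-\alpha}e^\omega T_{h_{2r}}$, and in your factorization the two summands are multiplied on the right by \emph{different} elements $T_1T_0$ and $T_0T_1$, so the hypothesis cannot be applied wholesale. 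Concretely, using $T_1T_0e^\omega = e^\omega T_1T_0 - e^{-\omega}P$ and $T_0T_1e^\omega = e^\omega T_0T_1 + e^\omega P$ with $P = T_0 + T_1 + 1$, one is left with a residual expression of the form $e^{-\alpha}B\,(T_0T_1 - T_1T_0) + e^{-\omega}(T_{h_{2r}} - T_{g_{2r}})P + e^{-\alpha}(B+B')P$, where $B=[T_{h_{2r}},e^\omega]$ and $B'=[T_{h_{2r}},e^{-\omega}]$; this does not vanish for formal reasons, and its cancellation requires genuine further computation that your proof does not supply. The odd case is sketched even more briefly (you don't address the $T_{h_{2r+2}}$ term at all).

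The paper's proof chooses a different decomposition: it uses left factorizations $T_{h_{2r+1}}=T_1T_{h_{2r}}$, $T_{h_{2r+2}}=T_{01}T_{h_{2r}}$, inserts $e^\alpha e^{-\alpha}$, rewrites both the left prefix and $e^{-\alpha}T_{h_{2r}}e^{-\alpha}$ via the $m=1$ base case and the inductive hypothesis respectively, and then explicitly tracks the cancellations over several lines. This bookkeeping is the substance of the proof; it is precisely what your argument elides. Your proposal would need to either complete a comparable explicit calculation in your right-factorization setup, or give a genuine structural reason that $k_{g_{2r}}$ commuting with $R(T)$ forces $k_{g_{2r+2}}$ to as well — neither of which is currently present.
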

\begin{proof}
  Denote by $\tilde k_{g_{m}}$ the expected formula. By Theorem \ref{T:KPeterson}(a), it suffices to show  $\tilde k_{g_{m}}\in \bL$, or equivalently, $\tilde k_{g_{m}} e^{-\alpha}=e^{-\alpha}\tilde k_{g_{m}}$. Clearly, this holds when $m=0$. It also holds for $m\in \{1, 2\}$ by direct calculations. In particular  we have
   \begin{align*}
       (T_0+T_1+(1-e^{-\alpha})T_{01})e^{\pm\alpha}&=e^{\pm\alpha}(T_0+T_1+(1-e^{-\alpha})T_{01});\\
       (T_{10}+e^{-\alpha}T_{01})e^{\pm\alpha}&=e^{\pm\alpha}(T_{10}+e^{-\alpha}T_{01}).
    \end{align*}  Assume that it holds for $m\leq 2r$ where $r\geq 1$. Then we have
    \begin{align*}
      \tilde k_{g_{2r+1}} e^{-\alpha}&=\big(T_{g_{2r+1}}+T_1T_{h_{2r}}+(1-e^{-\alpha})T_{01}T_{h_{2r}}\big)e^{-\alpha}\\
                                       &= T_{g_{2r+1}}e^{-\alpha}+(T_1+(1-e^{-\alpha})T_{01})e^{\alpha}e^{-\alpha}T_{h_{2r}}e^{-\alpha}\\
                                       &=T_{g_{2r+1}}e^{-\alpha}+\big(e^{\alpha}(T_0+T_1+(1-e^{-\alpha})T_{01})-T_0e^{\alpha}\big)
                                       \big(e^{-\alpha}(T_{g_{2r}}+e^{-\alpha}T_{h_{2r}})-T_{g_{2r}}e^{-\alpha}\big)\\
                                       &=  T_{g_{2r+1}}e^{-\alpha}+\big(T_0+T_1+(1-e^{-\alpha})T_{01}\big)T_{g_{2r}}+e^{-\alpha}\big(T_0+T_1+(1-e^{-\alpha})T_{01}\big)T_{h_{2r}}\\
                                           &\qquad-T_0(T_{g_{2r}}+e^{-\alpha}T_{h_{2r}})+T_0e^{\alpha}T_{g_{2r}}e^{-\alpha}
                                           -e^{\alpha}\big(T_0+T_1+(1-e^{-\alpha})T_{01}\big)T_{g_{2r}}e^{-\alpha}\\
                                           &=e^{-\alpha}\tilde k_{g_{2r+1}}+T_1T_{g_{2r}}+e^{-\alpha}T_0T_{h_{2r}}-T_0T_{h_{2r}}e^{-\alpha}-
                                           e^{\alpha}T_1T_{g_{2r}}e^{-\alpha}\\
                                           &=e^{-\alpha}\tilde k_{g_{2r+1}}-(T_{g_{2r}}+e^{-\alpha}T_{h_{2r}})e^{\alpha}e^{-\alpha}+T_{h_{2r}}e^{-\alpha}+
                                           e^{\alpha}T_{g_{2r}}e^{-\alpha}\\
                                           &=e^{-\alpha}\tilde k_{g_{2r+1}}
    \end{align*}
    Similarly, we can show  $\tilde k_{g_{2r+2}} e^{-\alpha}=   e^{-\alpha} \tilde k_{g_{2r+2}}$. Thus the statement follows.
\end{proof}

The following result follows from Lemma \ref{lemma:kx} and Theorem \ref{T:KPeterson}(b).

\begin{lemma} \label{lemma:lx} We have $l_{\rm id}=1$.
For $r\ge1$, we have
$$
	\ell_{g_{2r-1}} = (1-e^{-\alpha}) T_{h_{2r}} + \sum_{\substack{v\in W_\af \\ \ell(v) \le 2r-1}} T_v \qquad \text{and} \qquad
	\ell_{g_{2r}} = \sum_{\substack{v\in W_\af \\ \ell(v) \le 2r}} T_v.
$$
\end{lemma}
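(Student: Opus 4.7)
The plan is to apply Theorem \ref{T:KPeterson}(b) and sum the formulas of Lemma \ref{lemma:kx}. The first step is to describe the Bruhat order on $W_{\rm af}^-$ in type $A_1^{(1)}$. Since each length contains exactly one element of $W_{\rm af}^-$, namely $g_0 = \mathrm{id}, g_1, g_2, \ldots$, and a reduced expression for $g_m$ is the alternating word $s_0 s_1 s_0 \dotsm$ (or $s_1 s_0 s_1\dotsm$) of length $m$, every initial subword is a reduced expression for the corresponding $g_j$, so $g_j \le g_m$ for $j \le m$. Thus the Bruhat order on $W_{\rm af}^-$ is a total order by length, and Theorem \ref{T:KPeterson}(b) simplifies to $l_{g_m} = \sum_{j=0}^{m} k_{g_j}$.

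The next step is to substitute the formulas of Lemma \ref{lemma:kx} and group by the length of the indexing element. For $l_{g_{2r-1}}$ I would split the sum into the term $j=0$, the odd-index terms $j=2s-1$ for $s=1,\ldots,r$, and the even-index terms $j=2s$ for $s=1,\ldots,r-1$, giving
$$l_{g_{2r-1}} = T_{\mathrm{id}} + \sum_{s=1}^{r}\bigl(T_{g_{2s-1}} + T_{h_{2s-1}} + (1-e^{-\alpha}) T_{h_{2s}}\bigr) + \sum_{s=1}^{r-1}\bigl(T_{g_{2s}} + e^{-\alpha} T_{h_{2s}}\bigr).$$
For each odd length $2s-1 \le 2r-1$ the two elements $g_{2s-1}$ and $h_{2s-1}$ each contribute $T_v$ with coefficient $1$; for each even length $2s$ with $1 \le s \le r-1$, the element $g_{2s}$ contributes once, and the contributions to $T_{h_{2s}}$ combine via $(1-e^{-\alpha}) + e^{-\alpha} = 1$. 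The only unmatched term is $(1-e^{-\alpha}) T_{h_{2r}}$ coming from $s=r$, producing exactly the claimed expression.

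The even case reduces to the one-line telescope $l_{g_{2r}} = l_{g_{2r-1}} + k_{g_{2r}}$: adding $T_{g_{2r}} + e^{-\alpha} T_{h_{2r}}$ inserts the missing $T_{g_{2r}}$ and lifts the coefficient of $T_{h_{2r}}$ from $1-e^{-\alpha}$ to $1$, giving $l_{g_{2r}} = \sum_{\ell(v)\le 2r} T_v$. The only real ``obstacle'' is bookkeeping of indices; the essential combinatorial phenomenon is the cancellation $(1-e^{-\alpha}) + e^{-\alpha} = 1$ that reconstitutes the coefficient of $T_{h_{2s}}$ at each internal even length.
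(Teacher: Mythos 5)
Your proof is correct and follows the same route the paper intends: the paper simply asserts that the result ``follows from Lemma~\ref{lemma:kx} and Theorem~\ref{T:KPeterson}(b),'' and your write-up fills in exactly the bookkeeping that verification requires — noting that the Bruhat order on $W_\af^-$ is a total order in type $A_1^{(1)}$, so $l_{g_m}=\sum_{j=0}^m k_{g_j}$, and then grouping the sum by length with the cancellation $(1-e^{-\alpha})+e^{-\alpha}=1$ at each internal even length. (You have also silently corrected the typo in \eqref{eq:lw}, which should read $l_w=\sum_{v\le w}k_v$ rather than $k_w$, consistent with Lemma~\ref{lemma:xiO}(ii).)
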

  \begin{prop} \label{P:ltrans}
For $x\in W_\af^-$ and $n\in \mathbb{Z}_{<0}$, we have in $K_0^T(\Gr_{\SL_2})$
\begin{align*}
    \cO_x \cdot \cO_{t_{n\alpha^\vee}}=\cO_{x t_{n\alpha^\vee}}.
\end{align*}
\end{prop}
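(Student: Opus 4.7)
Plan: Via Theorem~\ref{T:KPeterson}, the isomorphism $k:K_0^T(\Gr_{SL_2}) \cong \bL$ sends $\cO_w \mapsto l_w$, so the proposition becomes the algebraic identity
\[
l_x \cdot l_{g_{2r}} = l_{xg_{2r}} \qquad \text{in } \bL,
\]
where $g_{2r} = t_{-r\alpha^\vee}$ is the unique element of $W_\af^-$ of length $2r$ and $r := -n \ge 1$. In the affine $SL_2$ dihedral group one checks directly (e.g.\ comparing reduced expressions of $g_m$ and $g_2 = s_1 s_0$) that $x g_{2r}$ lies in $W_\af^-$ with $\ell(x g_{2r}) = \ell(x) + 2r$, so both sides are well-defined Peterson elements.

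I would induct on $r$, the base case being $r=1$: namely $l_x \cdot l_{g_2} = l_{x g_2}$ for every $x \in W_\af^-$. Granted the base case, applying it once with $x$ replaced by $g_{2(r-1)}$ gives $l_{g_{2r}} = l_{g_{2(r-1)}} \cdot l_{g_2}$, and then
\[
l_x \cdot l_{g_{2r}} \;=\; (l_x \cdot l_{g_{2(r-1)}}) \cdot l_{g_2} \;=\; l_{xg_{2(r-1)}} \cdot l_{g_2} \;=\; l_{xg_{2r}},
\]
by associativity, the inductive hypothesis for $r-1$, and the base case applied once more to $xg_{2(r-1)} \in W_\af^-$.

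For the base case the identity is trivial when $x = \mathrm{id}$. For $x = g_m$ with $m \ge 1$ I would insert the closed forms of Lemma~\ref{lemma:lx},
\[
l_{g_{2s}} = \sum_{\ell(v)\le 2s} T_v, \qquad l_{g_{2s-1}} = (1-e^{-\alpha})\,T_{h_{2s}} + \sum_{\ell(v)\le 2s-1} T_v, \qquad l_{g_2} = \sum_{\ell(w)\le 2} T_w,
\]
and expand the product $l_{g_m} \cdot l_{g_2}$ in $\K$ using the defining relations $T_i^2 = -T_i$, the two-strand affine braid relations, and the commutation rule $T_i q = s_i(q) T_i + \partial_i(q)$ for $q \in R(T)$, which is needed to move the coefficient $1 - e^{-\alpha}$ past the $T_v$'s. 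The result is compared term-by-term against $l_{g_{m+2}}$ as computed from the same lemma. The even case $x = g_{2s}$ is a clean length-addition computation producing $\sum_{\ell(v) \le 2s+2} T_v$ exactly.

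The principal obstacle is the odd case $x = g_{2s-1}$: the anomalous summand $(1-e^{-\alpha}) T_{h_{2s}}$ must recombine with the factors $T_{g_2}$ and $T_{h_2}$ coming from $l_{g_2}$ to produce precisely the new anomalous term $(1-e^{-\alpha}) T_{h_{2s+2}}$ of $l_{g_{2s+1}}$, while every remaining cross-term telescopes into $\sum_{\ell(v)\le 2s+1} T_v$. This delicate cancellation is where the computation becomes genuinely nontrivial, and is the analogue for $SL_2$ of the ``involved arguments'' alluded to in the text. A less computational alternative would be to invoke Theorem~\ref{T: coeffc} directly: Proposition~\ref{claimcoeBBEE} gives $b_{g_2,[t_\mu]}$ in closed form, supported only at $t_\mu \in \{0,\,-\alpha^\vee\}$, so that the double sum
$\sum_{t_1, t_2 \in Q^\vee} b_{x,[t_1]}\, b_{g_2,[t_2]}\, e_{t_1 t_2,[z]}$
collapses to a short combinatorial identity yielding $\delta_{z,\, xg_2}$.
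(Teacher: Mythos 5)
Your proposal follows essentially the same route as the paper's proof: translate to $\bL$ via Theorem~\ref{T:KPeterson}, reduce to the case $n=-1$ (so $t_{n\alpha^\vee} = g_2$), and verify $l_{g_m}\,l_{g_2} = l_{g_{m+2}}$ using the closed formulas of Lemma~\ref{lemma:lx}. The paper invokes ``mathematical induction on $m$'' for this last identity where you propose a direct term-by-term expansion, but both verifications rest on the same lemma and the same underlying computation in $\K$, and you correctly identify the odd case with its anomalous $(1-e^{-\alpha})T_{h_{2s}}$ summand as the genuinely nontrivial step.
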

\begin{proof} It suffices to prove the statement for $n=-1$. Notice that $t_{-\alpha^\vee}=s_1s_0=g_2$ and  $x=g_m$ for some $m\in\mathbb{Z}_{\geq 0}$. By Theorem \ref{T:KPeterson}, we just need to show $l_{g_m}l_{g_2}=l_{g_{m+2}}$. This follows from Lemma \ref{lemma:lx} and mathematical induction on $m$.
\end{proof}
Thanks to the above formula, it remains to compute    $\cO_{s_1t_{-\alpha^\vee}} \cdot\cO_{s_1t_{-\alpha^\vee}}$.
For $x=s_1t_{-\alpha^\vee}=s_0=g_1$, by direct calculations we have $l_{g_1}^2=e^{-\alpha}l_{g_2}+(1-e^{-\alpha})l_{g_3}$. Therefore
  \begin{equation}\label{affP1}
       \cO_{s_1t_{-\alpha^\vee}} \cdot\cO_{s_1t_{-\alpha^\vee}}=(1-e^{-\alpha})\cO_{s_1t_{-2\alpha^\vee}}+e^{-\alpha} \cO_{t_{-\alpha^\vee}}.
   \end{equation}

\begin{remark}
  We can also calculate the above product by using Theorem \ref{T: coeffc}. For instance, for $z=s_{1}s_0=t_{-\alpha^\vee}$, all the  terms in the formula \eqref{eq:bbe} for   $c_{x,x}^z$ vanish  unless $t_1=t_2=t_{\alpha^\vee}$. Therefore
    $$c_{x, x}^{z}= b_{s_0, [t_{\alpha^\vee}]}^2 e_{t_{2\alpha^\vee}, [s_1s_0]}= \big({-e^{\alpha}\over 1-e^{ \alpha}}\big)^2e^{-\alpha}(1-e^{-\alpha})^2=e^{-\alpha}.$$
\end{remark}

 Formulas \eqref{qKP1} and \eqref{affP1}, together with Proposition \ref{P:ltrans}, implies that Conjectures \ref{conj:constants} and \ref{conj:isomorphism} hold when $G=\SL_2$.
\subsection{Multiplication for $\Gr_{SL_3}$}The complete flag manifold $\SL_3 /B=\Fl_3=\{V_1\subset V_2 \subset \mathbb{C}^3~|~ \dim V_1=1, \dim V_2=2\}$
parameterizes complete flags in $\mathbb{C}^3$.  The Weyl group $W$   is the permutation group $S_3$ of three objects generated by  simple reflections $s_1, s_2$. We have the highest root $\theta=\alpha_1+\alpha_2$ and coroot $\theta^\vee=\alpha_1^\vee+\alpha_2^\vee$. By calculations using Theorem \ref{T: coeffc}, we obtain
   $\cO_{wt_{-\theta^\vee}}\cO_{t_{-\theta^\vee}}=\cO_{wt_{-2\theta^\vee}}$ in $K_0^T(\Gr_{\SL_3})$ for any $w\in W$, in addition to the following multiplication table.

 \resizebox{.99\hsize}{!}{\parbox{7cm}{\begin{align*}
   \cO_{s_1t_{-\theta^\vee}}  \cO_{s_1t_{-\theta^\vee}}&= (1-e^{-\alpha_1})\cO_{s_1t_{-2\theta^\vee}}+e^{-\alpha_1}\cO_{s_2s_1t_{-2\theta^\vee}}+e^{-\alpha_1}\cO_{t_{-2\theta^\vee+\alpha_1^\vee}} -e^{-\alpha_1}\cO_{s_2t_{-2\theta^\vee+\alpha_1^\vee}}\\
   \cO_{s_1t_{-\theta^\vee}}  \cO_{s_2t_{-\theta^\vee}}&=\cO_{s_1s_2 t_{-2\theta^\vee}}+\cO_{s_2s_1 t_{-2\theta^\vee}}-\cO_{s_1s_2s_1 t_{-2\theta^\vee}}\\
   \cO_{s_1t_{-\theta^\vee}}  \cO_{s_1s_2t_{-\theta^\vee}}&=(1-e^{-\alpha_1})\cO_{s_1s_2 t_{-2\theta^\vee}}+e^{-\alpha_1}\cO_{s_1s_2s_1 t_{-2\theta^\vee}}\\
     \cO_{s_1t_{-\theta^\vee}}  \cO_{s_2s_1t_{-\theta^\vee}}&=(1-e^{-\alpha_1-\alpha_2})\cO_{s_2s_1 t_{-2\theta^\vee}}+e^{-\alpha_1-\alpha_2} \cO_{s_2 t_{-2\theta^\vee+\alpha_1^\vee}}\\
     \cO_{s_1t_{-\theta^\vee}}  \cO_{s_1s_2s_1t_{-\theta^\vee}}&=(1-e^{-\alpha_1-\alpha_2})\cO_{s_1s_2s_1 t_{-2\theta^\vee}}+e^{-\alpha_1-\alpha_2} \cO_{s_1s_2 t_{-2\theta^\vee+\alpha_1^\vee}}\\
       &\quad+ e^{-\alpha_1-\alpha_2}\cO_{t_{-2\theta^\vee+\alpha_1^\vee+\alpha_2^\vee}}-e^{-\alpha_1-\alpha_2}\cO_{s_1t_{-2\theta^\vee+\alpha_1^\vee+\alpha_2^\vee}}\\
      \cO_{s_1s_2t_{-\theta^\vee}}  \cO_{s_1s_2t_{-\theta^\vee}}&=(1-e^{-\alpha_1})(1-e^{-\alpha_1-\alpha_2})\cO_{s_1s_2t_{-2\theta^\vee}}+e^{-\alpha_1} \cO_{s_2s_1t_{-2\theta^\vee+\alpha_2^\vee}}\\
         &\quad +(1-e^{-\alpha_1})e^{-\alpha_1-\alpha_2}\cO_{s_1t_{-2\theta^\vee+\alpha_2^\vee}}\\
   \cO_{s_1s_2t_{-\theta^\vee}}  \cO_{s_2s_1t_{-\theta^\vee}}&=(1-e^{-\alpha_1-\alpha_2})\cO_{s_1s_2s_1t_{-2\theta^\vee}}+e^{-\alpha_1-\alpha_2} \cO_{t_{-2\theta^\vee+\alpha_1^\vee+\alpha_2^\vee}}\\
    \cO_{s_1s_2t_{-\theta^\vee}}  \cO_{s_1s_2s_1t_{-\theta^\vee}}&=(1-e^{-\alpha_1})(1-e^{-\alpha_1-\alpha_2})\cO_{s_1s_2s_1t_{-2\theta^\vee}}+(1-e^{-\alpha_1-\alpha_2})e^{-\alpha_1} \cO_{s_2s_1t_{-2\theta^\vee+\alpha_2^\vee}}\\
                 &\quad+(1-e^{-\alpha_1}) e^{-\alpha_1-\alpha_2} \cO_{ t_{-2\theta^\vee+\alpha_1^\vee+\alpha_2^\vee}}+  e^{-2\alpha_1-\alpha_2}  \cO_{s_2 t_{-2\theta^\vee+\alpha_1^\vee+\alpha_2^\vee}}\\
 \cO_{s_1s_2s_1t_{-\theta^\vee}}   \cO_{s_1s_2s_1t_{-\theta^\vee}}&=(1-e^{-\alpha_1})(1-e^{-\alpha_2})(1-e^{-\alpha_1-\alpha_2})\cO_{s_1s_2s_1t_{-2\theta^\vee}}+e^{-\alpha_1-\alpha_2}\cO_{s_1s_2t_{-2\theta^\vee+\alpha_1^\vee+\alpha_2^\vee}}\\
                       &\quad +(1-e^{-\alpha_1})(1-e^{-\alpha_1-\alpha_2})e^{-\alpha_2} \cO_{s_1s_2t_{-2\theta^\vee+\alpha_1^\vee}}+e^{-\alpha_1-\alpha_2}\cO_{s_2s_1t_{-2\theta^\vee+\alpha_1^\vee+\alpha_2^\vee}}\\
                         &\quad      + (1-e^{-\alpha_2})(1-e^{-\alpha_1-\alpha_2}) \cO_{s_2s_1t_{-2\theta^\vee+\alpha_2^\vee}}-e^{-\alpha_1-\alpha_2}\cO_{s_1s_2s_1t_{-2\theta^\vee+\alpha_1^\vee+\alpha_2^\vee}}\\
                         & \quad+e^{-\alpha_1-2\alpha_2}(1-e^{-\alpha_1^\vee})\cO_{ s_1t_{-2\theta^\vee+\alpha_1^\vee+\alpha_2^\vee}}
                         +e^{-2\alpha_1- \alpha_2}(1-e^{-\alpha_2^\vee})\cO_{ s_2t_{-2\theta^\vee+\alpha_1^\vee+\alpha_2^\vee}}\\
                         &\quad+ e^{-\alpha_1- \alpha_2}(1-e^{-\alpha_1^\vee})(1-e^{-\alpha_2^\vee})\cO_{ t_{-2\theta^\vee+\alpha_1^\vee+\alpha_2^\vee}}\\
  \end{align*}}}
 The remaining products are read off immediately from the above table by the symmetry of the Dynkin diagram of Lie type $A_2$.

 Comparing the above table with the appendix in \cite{li.mihalcea:lines}, we conclude that Conjecture \ref{conj:constants} holds whenever the degree $d$ in $N_{u, v}^{w, d}$ is given by $(0,0), (1,0)$ or $(0, 1)$.


\begin{thebibliography}{LSS}

\bibitem[AGM]{anderson.griffeth.miller} {D.~Anderson, S.~Griffeth,  and E.~Miller},
     \emph{Positivity and {K}leiman transversality in equivariant
              {$K$}-theory of homogeneous spaces}, {J. Eur. Math. Soc. (JEMS)}, {{13}}, (2011), no. 1,   {57--84}.

\bibitem[BF]{BF} {A.~Braverman and M. Finkelberg},
\emph{Semi-infinite {S}chubert varieties and quantum $K$-theory of flag manifolds}, {J. Amer. Math. Soc.} 27 (2014) 1147--1168.

\bibitem[BK]{baldwin.kumar:posII} S. Baldwin  and S. Kumar, {\em Positivity in $T$-equivariant $K$-theory of flag variaties associated to Kac-Moody groups, II}, Representation Theory, 21 (2017), 35--60. 

\bibitem[Bri02]{brion:Kpos} M. Brion, {\em Positivity in the Grothendieck group of complex flag varieties}, Special issue in celebration of Claudio Procesi's 60th birthday,
J. Algebra 258 (2002), no. 1, 137--159.

\bibitem[Bri05]{brion:flagv} M. Brion,   {\em Lectures on the geometry of flag varieties}, Topics in cohomological studies of algebraic varieties, 33--85, Trends Math., Birkh\"auser, Basel, 2005.

\bibitem[BM11]{buch.m:qk} A. S.~Buch and L. C.~Mihalcea, \emph{Quantum $K$-theory of Grassmannians},  Duke Math. J. {  156} (2011), no. 3, 501--538.

\bibitem[BK05]{brion.kumar:frobenius} M.~ Brion and S.~Kumar. \emph{Frobenius splitting methods in geometry and representation theory}, volume 231 of Progress in Mathematics. Birkh{\"a}user Boston, Inc., Boston, MA, 2005.

\bibitem[BM15]{buch.m:nbhds} A. S.~Buch and L. C.~Mihalcea, \emph{Curve neighborhoods of Schubert varieties}, J. Differential Geom. 99 (2015), no. 2, 255--283.

\bibitem[BCMP13]{bcmp:qkfinite} A. S.~Buch, P.E. Chaput, L. C.~Mihalcea and N. Perrin, \emph{Finiteness of cominuscule quantum $K$-theory}, Ann. Sci. \'Ec. Norm. Sup\'er. (4) 46 (2013), no. 3, 477--494.

\bibitem[BCMP16]{bcmp:ratcon} A. S.~Buch, P.E. Chaput, L. C.~Mihalcea, and N. Perrin, \emph{Rational connectedness implies finiteness of quantum $K$-theory}, Asian J. Math. 20 (2016), no. 1, 117--122.

\bibitem[BCMP16+]{bcmp:qkchevalley} A. S.~Buch, P.E. Chaput, L. C.~Mihalcea, and N. Perrin, \emph{A Chevalley formula for the equivariant quantum $K$-theory of cominuscule varieties}, {\tt arXiv:math.AG/1604.07500}.

\bibitem[CP11]{chaput.perrin:rationality} P.~E.Chaput and N.~Perrin, {\em Rationality of some Gromov-Witten varieties and applications to quantum K theory}, Commun. Contemp. Math. { 13} (2011), no. 1, 67 - 90.

\bibitem[CG]{chriss.ginzburg} N. Chriss and V. Ginzburg, \emph{Representation theory and complex geometry},  Birkh\"auser Boston, Inc., Boston, MA, 2010.

\bibitem[Deo]{deodhar} V.V. Deodhar,  \emph{On some geometric aspects of Bruhat orderings. I. A finer decomposition of Bruhat cells}, Inventiones Math., {  79} (1985), pag. 499--511.


\bibitem[Giv]{givental:onwdvv} A. Givental,  \emph{On the WDVV equation in quantum $K$-theory}, Dedicated to William Fulton on the occasion of his 60th birthday, Michigan Math. J. 48 (2000), 295--304.

\bibitem[GK]{givental.kim} A. Givental  and B. Kim,  \emph{Quantum cohomology of flag manifolds and Toda lattices}, Comm. Math. Phys. 168 (1995), no. 3, 609--641.

\bibitem[GL]{givental.lee:finite} A. Givental and   Y.-P. Lee, \emph{Quantum $K$-theory on flag manifolds, finite-difference Toda lattices and quantum groups}, Invent. Math. 151 (2003), no. 1, 193--219.
\bibitem[HL]{he.lam}X. He and T. Lam, Thomas, \emph{Projected Richardson varieties and affine Schubert varieties}, Ann. Inst. Fourier (Grenoble) 65 (2015), no. 6, 2385--2412.
\bibitem[IIM]{IIM} T.~Ikeda, S.~Iwao, and T.~Maeno, \emph{Peterson Isomorphism in $K$-theory and Relativistic Toda Lattice}, {\tt arXiv:1703.08664}.

\bibitem[IMT]{iritani.milanov.tonita}
H. Iritani, T. Milanov, and V. Tonita, \emph{Reconstruction and convergence in quantum $K$-theory via difference equations}, Int. Math. Res. Not. IMRN 2015, no. 11, 2887--2937.


\bibitem[Kas]{Kas} M. Kashiwara, \emph{The flag manifold of Kac-Moody Lie algebra}, Algebraic analysis, geometry, and number
theory (Baltimore, MD, 1988), 161--190, Johns Hopkins Univ. Press, Baltimore, MD, 1989.
\bibitem[KaSh]{KS} M. Kashiwara and M. Shimozono, \emph{Equivariant $K$-theory of affine flag manifolds and affine Grothendieck polynomials},
Duke Math. J. 148 (2009), no. 3, 501--538.
\bibitem[Kim]{kim:toda} B. Kim,   \emph{Quantum cohomology of flag manifolds $G/B$ and quantum Toda lattices}, Ann. of Math. (2) 149 (1999), no. 1, 129--148.




\bibitem[KK86]{KK:H} B.~Kostant and S.~Kumar,   \emph{The nil Hecke ring and cohomology of $G/P$ for a Kac-Moody group $G$}, Adv. in Math. 62 (1986), no. 3, 187--237.
\bibitem[KK90]{KK:K} B.~Kostant and S.~Kumar, \emph{$T$-equivariant $K$-theory of generalized flag varieties}, J. of Differential Geometry,  {32} (1990),  no. 2, 549--603.

\bibitem[Kum02]{KumBook}S.~Kumar, \emph{Kac-Moody groups, their flag varieties and representation theory}, Progress in Mathematics, 204. Birkh\"auser Boston, Inc., Boston, MA, 2002.

\bibitem[Kum15]{kumar:positivity} S.~Kumar, {\em Positivity in $T$-equivariant $K$-theory of flag variaties associated to Kac-Moody groups}, to appear in J. of European Math. Soc.

\bibitem[KuSc]{kumar.schwede} S.~Kumar and K.~Schwede, {\em Richardson varieties have Kawamata log-terminal singularities}, Int Math Res Notices (2014) 2014 (3): 842--864.

\bibitem[LSS]{LSS} T. Lam, A. Schilling, and M. Shimozono, \emph{$K$-theory Schubert calculus of the affine Grassmannian}, Compos. Math. 146 (2010), no. 4, 811--852.
\bibitem[LS]{LS}  T.~Lam and M.~Shimozono, \emph{Quantum cohomology of $G/P$ and homology of affine Grassmannian}, Acta Math. {\bf 204} (2010), no. 1, 49--90.

\bibitem[Lee]{lee:qk} Y.-P.~Lee, \emph{Quantum $K$-theory. I. Foundations},  Duke Math. J. {\bf 121} (2004), no. 3, 389--424.


\bibitem[LP]{lee.pandharipande} Y.-P. Lee and R. Pandharipande, \emph{A reconstruction theorem in quantum cohomology and quantum $K$-theory}, Amer. J. Math. 126 (2004), no. 6, 1367--1379.
\bibitem[LL]{LL} N.~C.~Leung and C.~Li, \emph{Gromov-Witten invariants for $G/B$ and Pontryagin product for $\Omega K$}, Tran. Amer. Math. Soc. (2012) no. 5, 2567-2599.

\bibitem[LM]{li.mihalcea:lines} C.~Li and L.C.~Mihalcea, \emph{$K$-theoretic Gromov-Witten invariants of lines in homogeneous spaces}, Int Math Res Notices (2014) 2014, no. 17, 4625--4664.






\bibitem[Mih]{mihalcea:eqchev} L.~C.~Mihalcea, \emph{On equivariant quantum cohomology of homogeneous spaces: Chevalley formulas and algorithms}, Duke Math. J. {\bf 140} (2007), no. 2, 321--350.

\bibitem[Pet]{P} D. Peterson,  \emph{Quantum cohomology of $G/P$}, Lecture notes at MIT, 1997.

\end{thebibliography}
\end{document}